\documentclass[12pt, reqno]{amsart}

\usepackage[scaled]{helvet}
\usepackage{mathtools}
\mathtoolsset{showonlyrefs}
\usepackage{etoolbox}
\usepackage[utf8]{inputenc}
\usepackage[T1]{fontenc}

\usepackage{amsmath,amssymb,amsfonts,mathrsfs,amsbsy}
\usepackage{relsize}
\usepackage{enumitem}
\usepackage{anyfontsize}
\usepackage{tikz,xcolor}
\usepackage{bm}
\usepackage{comment}
\usepackage[margin=1in]{geometry}

\definecolor{lava}{rgb}{0.81,0.06,0.13}
\definecolor{Cblue}{rgb}{0.50,0.85,0.85}
\definecolor{lime}{HTML}{A6CE39}

\usepackage[colorlinks,linkcolor=lava,citecolor=blue,urlcolor=Cblue,hypertexnames=false]{hyperref}

\DeclareRobustCommand{\orcidicon}{%
  \begin{tikzpicture}
    \draw[lime,fill=lime] (0,0) circle [radius=0.16]
      node[white] {{\fontfamily{qag}\selectfont\tiny ID}};
    \draw[white,fill=white] (-0.0625,0.095) circle [radius=0.007];
  \end{tikzpicture}\hspace{-2mm}}
\foreach \x in {A,B}{%
  \expandafter\xdef\csname orcid\x\endcsname{%
    \noexpand\href{https://orcid.org/\csname orcidauthor\x\endcsname}{\noexpand\orcidicon}}}

\numberwithin{equation}{section}
\newtheorem{theo}{Theorem}[section]
\newtheorem{prop}{Proposition}[section]
\newtheorem{lem}{Lemma}[section]
\newtheorem{coro}{Corollary}[section]
\newtheorem{defn}{Definition}[section]
\newtheorem{rem}{Remark}[section]

\renewcommand{\ge}{\geqslant}
\renewcommand{\le}{\leqslant}

\newcommand{\R}{\mathbb{R}}
\newcommand{\N}{\mathbb{N}}

\newcommand{\Lap}{(-\Delta)^s}

\title[Lifespan for fractional Hardy--H\'enon equations]
{Sharp Lifespan Estimates and Fujita Phenomena \\
for Fractional Hardy--H\'enon Type Parabolic Equations}

\author[M.~Majdoub \& B.\,T.~Torebek]
{Mohamed Majdoub\orcidA{} \and Berikbol~T.~Torebek\orcidB{}}

\address[M.~Majdoub]{%
  Department of Mathematics, College of Science, Imam Abdulrahman Bin Faisal
  University, P.\,O.\ Box 1982, Dammam, Saudi Arabia.\newline
  Basic and Applied Scientific Research Center, Imam Abdulrahman Bin Faisal
  University, P.\,O.\ Box 1982, 31441, Dammam, Saudi Arabia.}
\email{mmajdoub@iau.edu.sa}
\email{med.majdoub@gmail.com}
\email{mohamed.majdoub@fst.rnu.tn}

\address[B.\,T.~Torebek]{%
  Institute of Mathematics and Mathematical Modeling,
  28 Shevchenko Str., 050010 Almaty, Kazakhstan.}
\email{torebek@math.kz}

\subjclass[2020]{Primary: 35K05, 35K58, 35B44;
  Secondary: 35R11, 47D06, 35A01, 35K67.}

\keywords{Fractional heat equation; Hardy--H\'enon weight; Fujita exponent;
  finite-time blow-up; lifespan estimates; semigroup theory;
  test-function method.}

\begin{document}

\begin{abstract}
We study the lifespan of mild solutions to the fractional semilinear
parabolic Cauchy problem with a Hardy--H\'enon-type weight
\[
  u_t + (-\Delta)^s u = |x|^{-\gamma}\,|u|^p,
  \qquad (t,x)\in(0,\infty)\times\R^N,
  \qquad u(0,x)=\varepsilon\,u_0(x),
\]
where $0<s<1$, $0\le\gamma<\min(2s,N)$, $p>1$ and $u_0\in L^1\cap L^\infty$
with $\int_{\R^N}u_0\,dx>0$. Setting
\[
  p_F \;:=\; 1+\frac{2s-\gamma}{N},
\]
we prove that the lifespan $T_\varepsilon$ obeys, for every sufficiently
small $\varepsilon>0$,
\[
  T_\varepsilon \;\approx\;
  \begin{cases}
     \varepsilon^{-\,\beta^{-1}},& 1<p<p_F,\\[1mm]
     \exp\!\big(C\,\varepsilon^{-(p-1)}\big),& p=p_F,\\[1mm]
     +\infty,& p>p_F,
  \end{cases}
  \qquad
  \beta   \;=\;\frac{(2s-\gamma)-N(p-1)}{2s(p-1)}.
\]
The lower bound rests on fractional heat-kernel estimates and an
$L^1$--$L^\infty$ Hardy-type interpolation inequality; the upper bound is
obtained by testing the equation against the \emph{backward fractional
heat kernel}, a globally defined positive weight for which $(-\Delta)^s$ is
controlled everywhere and the linear terms cancel identically by
self-adjointness. This circumvents the compactly supported cutoffs of the
classical test-function method, which are incompatible with a nonlocal
operator. The exponent $\beta$ is sharp; for $\gamma=0$ it reduces to the
fractional Lee--Ni exponent $\frac{1}{p-1}-\frac{N}{2s}$. To the best of our knowledge, these results are new even for $\gamma=0.$ We also establish a
large-data lifespan law, sharp lower bounds on the blow-up rate together with
a conditional Type-I upper bound, a conditional self-similar profile result.
\end{abstract}

\maketitle

\section{Introduction}\label{sec:intro}

We consider the Cauchy problem
\begin{equation}\label{eq:main}
  \begin{cases}
    \displaystyle u_t + (-\Delta)^s u \;=\; |x|^{-\gamma}\,|u|^p,
       & x\in\R^N,\ t>0,\\[1mm]
    u(0,x)\;=\;\varepsilon\,u_0(x), & x\in\R^N,
  \end{cases}
\end{equation}
under the standing assumptions
\begin{equation}\label{eq:standing}
  0<s<1,\qquad 0\le\gamma<\min(2s,N),\qquad p>1,\qquad
  u_0\in L^1(\R^N)\cap L^\infty(\R^N),
\end{equation}
where $\varepsilon>0$ is a small parameter. The fractional Laplacian
$(-\Delta)^s$ is the Fourier multiplier with symbol $|\xi|^{2s}$,
equivalently
\[
  (-\Delta)^s f(x)
  = c_{N,s}\,\mathrm{P.V.}\!\int_{\R^N}\frac{f(x)-f(y)}{|x-y|^{N+2s}}\,dy,
  \qquad c_{N,s}=\frac{2^{2s}\,\Gamma(\tfrac{N+2s}{2})}
                       {\pi^{N/2}\,|\Gamma(-s)|},
\]
see~\cite{DPV2012Hitchhiker}. The semigroup $\{e^{-t(-\Delta)^s}\}_{t\ge 0}$
is given by convolution with a smooth positive kernel $G_s(t,\cdot)$,
recalled in Section~\ref{sec:prelim}.

When $\gamma=0$, problem~\eqref{eq:main} reduces to the fractional Fujita
equation. In the classical local case $s=1$, $\gamma=0$, Fujita
\cite{Fujita1966} discovered that the exponent
\(
  p_F=1+\tfrac{2}{N}
\)
separates global existence (for $p>p_F$) from finite-time blow-up of
positive solutions (for $p\le p_F$); the critical exponent itself was
shown to belong to the blow-up regime by Hayakawa~\cite{Hayakawa1973} and
Kobayashi--Sirao--Tanaka~\cite{KobayashiSiraoTanaka1977}. Sharp lifespan
estimates were obtained by Lee--Ni~\cite{LeeNi1992}. For higher-order
counterparts ($s=m\in\N$, $\gamma=0$) we refer to
\cite{Sun2010,SunShi2012,MajdoubOtsmaneTayachi2018,PeletierTroy2012,TobakhanovTorebek},
and to~\cite{TobakhanovTorebek2026} for an exterior-domain treatment.

The nonlocal case ($0<s<1$, $\gamma=0$) goes back to Sugitani
\cite{Sugitani1975} and was further developed by Guedda--Kirane
\cite{Kirane1,Kirane2}, Fino--Karch~\cite{Fino},
Alsaedi--Ahmad--Kirane--Nabti~\cite{AlsaediEtAl2020}, and many others.
More recently, Biagi--Punzo--Vecchi~\cite{biagi2024} and Del Pezzo--Ferreira
\cite{Fuj2025} obtained Fujita-type results for mixed local--nonlocal
operators, identifying the critical exponent $p_F=1+\tfrac{2s}{N}$
when $\gamma=0$; their methods rest on the Kaplan eigenfunction technique
\cite{Kaplan} and on the construction of explicit global supersolutions,
respectively. These results were subsequently extended in a recent work by Kumar and the second author \cite{KumarTorebek2026}. For the weighted ($\gamma>0$) Hardy--H\'enon parabolic
problem we mention~\cite{ChikamiIkedaTaniguchi2022,DaoFractionalHH,
MajdoubLaMatematica2023,AbdellaouiPeralPrimo2020} and, for the associated
stationary classification, \cite{YangZou2020,DaiQin2023}.

The singular weight $|x|^{-\gamma}$ with $\gamma>0$ strengthens the
nonlinear interaction near the origin and modifies the natural scaling
structure of the equation. In fact, equation~\eqref{eq:main} (without the
initial condition) is invariant under
\begin{equation}\label{eq:scaling}
  u_\lambda(x,t)
  =
  \lambda^{\frac{2s-\gamma}{p-1}}
  u(\lambda x,\lambda^{2s}t),
  \qquad \lambda>0,
\end{equation}
and a straightforward computation gives
\(
\|u_\lambda(\cdot,0)\|_{L^q}
\sim
\lambda^{\frac{2s-\gamma}{p-1}-\frac{N}{q}}.
\)
In particular, the $L^1$-norm is scale-invariant precisely when
\begin{equation}\label{eq:pF}
  p=p_F:=1+\frac{2s-\gamma}{N},
\end{equation}
so that $p_F$ arises naturally as the Fujita critical exponent
of~\eqref{eq:main}.

We are now in a position to state our main result.

\begin{theo}\label{thm:main}
Assume \eqref{eq:standing} and $\int_{\R^N}u_0(x)\,dx>0$.
Then~\eqref{eq:main} admits a unique mild solution
\(
  u\in C\big([0,T_\varepsilon);\,L^1(\R^N)\cap L^\infty(\R^N)\big),
\)
where the lifespan $T_\varepsilon$ satisfies, for all $\varepsilon>0$
sufficiently small,
\begin{equation}\label{eq:lifespan}
  T_\varepsilon \;\approx\;
  \begin{cases}
    \varepsilon^{-\,{1\over \beta}}, & 1<p<p_F,\\[1.5mm]
    \exp\!\big(C\,\varepsilon^{-(p-1)}\big), & p=p_F,\\[1.5mm]
    +\infty, & p>p_F,
  \end{cases}
\end{equation}
with
\begin{equation}\label{eq:beta}
  \beta
  \;=\; \frac{1}{p-1}-\frac{N}{2s}-\frac{\gamma}{2s(p-1)}
  \;=\; \frac{(2s-\gamma)-N(p-1)}{2s(p-1)}\;>\;0
  \qquad(1<p<p_F),
\end{equation}
equivalently
\(
   T_\varepsilon \approx
   \varepsilon^{-\frac{2s(p-1)}{(2s-\gamma)-N(p-1)}}
\)
in the subcritical range. The two-sided estimate in the subcritical and
critical regimes is established under the additional restriction
$\gamma<2s(p-1)$ \textup{(}see Section~\ref{sec:upper} and
Remark~\ref{rem:ub-range}\textup{)}; for $\gamma=0$ this restriction is
vacuous.
\end{theo}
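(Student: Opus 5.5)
We split the argument into local well-posedness, a lower bound for $T_\varepsilon$ (which also yields global existence when $p>p_F$), and a matching upper bound obtained by testing the equation against the backward fractional heat kernel.

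\emph{Local existence and uniqueness.} We write \eqref{eq:main} in mild form,
\[
u(t)=\varepsilon e^{-t(-\Delta)^s}u_0+\int_0^t e^{-(t-\tau)(-\Delta)^s}\big(|x|^{-\gamma}|u|^p\big)\,d\tau ,
\]
and run a contraction argument in $C([0,T];L^1\cap L^\infty)$. The singular weight is handled by the Hardy-type interpolation
\[
\||x|^{-\gamma}f\|_{L^1}\lesssim \|f\|_{L^1}^{1-\gamma/N}\|f\|_{L^\infty}^{\gamma/N},
\]
obtained by splitting at $|x|=R$ and optimizing in $R$. For the $L^\infty$ bound we split $|x|^{-\gamma}|u|^p$ into the pieces on $\{|x|<1\}$ and $\{|x|\ge1\}$. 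The first piece lies in $L^q$ for some $q<N/\gamma$, and the kernel estimate $\|G_s(t)\|_{L^{q'}}\lesssim t^{-N/(2sq)}$ is then integrable near $0$ because $\gamma<2s$. Uniqueness and the blow-up alternative ($\|u(t)\|_{L^1\cap L^\infty}\to\infty$ as $t\to T_\varepsilon$ if $T_\varepsilon<\infty$) follow by the standard argument.

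\emph{Lower bound and global existence.} We work in the weighted space
\[
\|u\|_X=\sup_{t<T}\Big(\|u(t)\|_{L^1}+(1+t)^{N/(2s)}\|u(t)\|_{L^\infty}\Big).
\]
The linear part satisfies $\|\varepsilon e^{-t(-\Delta)^s}u_0\|_X\le C\varepsilon$. For the Duhamel term, combining the Hardy interpolation with the heat kernel bound gives
\[
\||x|^{-\gamma}|u|^p(\tau)\|_{L^1}\lesssim (1+\tau)^{-\frac{N(p-1)+\gamma}{2s}}\|u\|_X^p .
\]
Estimating the $L^\infty$ norm of the Duhamel term the same way, with the $|x|<1$ splitting used again near $\tau=t$, we expect
\[
\|u\|_X\le C\varepsilon+C\,\Phi(T)\,\|u\|_X^p,\qquad \Phi(T)=\int_0^T(1+\tau)^{-\frac{N(p-1)+\gamma}{2s}}\,d\tau .
\]
The exponent $\frac{N(p-1)+\gamma}{2s}$ is $<1$, $=1$ or $>1$ exactly when $p<p_F$, $p=p_F$ or $p>p_F$. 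Accordingly $\Phi(T)\sim T^{\beta(p-1)}$, $\log T$, or $O(1)$. Closing the fixed point requires $\Phi(T)\varepsilon^{p-1}\ll1$. This gives $T_\varepsilon\gtrsim\varepsilon^{-1/\beta}$ in the subcritical case and $T_\varepsilon\ge\exp(c\varepsilon^{-(p-1)})$ in the critical case. In the supercritical case it gives global existence for small $\varepsilon$; the paper's notation $T_\varepsilon\approx+\infty$ is to be read in this sense.

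\emph{Upper bound.} Fix $T<T_\varepsilon$, a large $R$, and the positive weight $\phi(t,x)=G_s(R^{2s}+T-t,x)$. It satisfies $\phi_t=(-\Delta)^s\phi$. Pairing the equation with $\phi$ and using self-adjointness, the linear terms cancel, so
\[
\frac{d}{dt}\int u\phi\,dx=\int |x|^{-\gamma}|u|^p\phi\,dx .
\]
Let $I(t)=\int u\phi$ and $M(t)=\int|x|^{-\gamma}\phi\,dx$. By Jensen/Hölder with the measure $|x|^{-\gamma}\phi\,dx$,
\[
\int|x|^{-\gamma}|u|^p\phi\ge M^{1-p}\Big(\int |x|^{-\gamma}u\phi\Big)^p .
\]
This must be compared with $I$, and doing so is the main obstacle: the weight $|x|^{-\gamma}$ is large near the origin, not small, so the comparison is not automatic. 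My plan is to compare $\phi$ and $|x|^{-\gamma}\phi$ on the scale $\rho=(R^{2s}+T-t)^{1/(2s)}$. We use $\int|x|^{-\gamma}\phi\sim\rho^{-\gamma}$, and we must control the heavy tails $\phi\sim\rho^{2s}|x|^{-N-2s}$ at infinity. The restriction $\gamma<2s(p-1)$ should enter exactly here, namely in absorbing the tail contribution $\int_{|x|>\rho}u\phi$ via a Hölder estimate with the decaying factor $|x|^{-\gamma}$.

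Granting a bound of the form $\int|x|^{-\gamma}|u|^p\phi\gtrsim \rho^{-\gamma-N(p-1)}I^p$, we obtain the differential inequality
\[
I'\gtrsim \big(R^{2s}+T-t\big)^{-\frac{\gamma+N(p-1)}{2s}}\,I^p,\qquad I(0)\gtrsim \varepsilon R^{-N},
\]
where the lower bound on $I(0)$ uses $\int u_0>0$ and $R$ large. Integrating from $0$ to $T$, the finiteness of $T$ forces
\[
\varepsilon^{p-1}R^{-N(p-1)}\,\Phi_R(T)\lesssim 1,\qquad \Phi_R(T)=\int_0^T (R^{2s}+\tau)^{-\frac{\gamma+N(p-1)}{2s}}\,d\tau .
\]
In the subcritical case we choose $R^{2s}\sim T$. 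The constraint then becomes $\varepsilon^{p-1}T^{\beta(p-1)}\lesssim1$, that is, $T_\varepsilon\lesssim\varepsilon^{-1/\beta}$. In the critical case we take $R$ fixed. Then $\Phi_R(T)\sim\log T$, so $\varepsilon^{p-1}\log T\lesssim1$, giving $T_\varepsilon\le\exp(C\varepsilon^{-(p-1)})$.
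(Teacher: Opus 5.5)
Your local theory and lower-bound bootstrap follow the paper: same weighted norm $\|u\|_{L^1}+(1+t)^{N/(2s)}\|u\|_{L^\infty}$, same interpolation lemma, same exponent $\alpha=\frac{N(p-1)+\gamma}{2s}$. Your weight $G_s(R^{2s}+T-t,\cdot)$ is the paper's backward kernel with $b=R^{2s}+T$.

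\textbf{The upper-bound inequality.} The genuine gap is the step you explicitly grant, which is the heart of the upper bound, and the version you grant has the wrong power of $\rho$. Jensen with respect to $|x|^{-\gamma}\phi\,dx$ puts the weight on the wrong side. It would require $\int|x|^{-\gamma}|u|\phi\gtrsim\rho^{-\gamma}\int|u|\phi$, which fails whenever the mass of $u$ lies at $|x|\gg\rho$. The missing idea is to split the weight inside H\"older, $|u|\phi=\bigl(|x|^{-\gamma/p}|u|\phi^{1/p}\bigr)\bigl(|x|^{\gamma/p}\phi^{1/p'}\bigr)$. Since $0<I\le\int|u|\phi$, this gives
\[
I^p\le\Bigl(\int|x|^{-\gamma}|u|^p\phi\,dx\Bigr)\Bigl(\int|x|^{\frac{\gamma}{p-1}}\phi\,dx\Bigr)^{p-1}
=C_*^{\,p-1}\rho^{\gamma}\int|x|^{-\gamma}|u|^p\phi\,dx,
\qquad C_*=\int_{\R^N}|y|^{\frac{\gamma}{p-1}}\mathscr{K}_s(y)\,dy .
\]
Here $C_*<\infty$ iff $\gamma<2s(p-1)$, because $\mathscr{K}_s\asymp\langle y\rangle^{-N-2s}$. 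That is exactly where the restriction enters, and no tail absorption is needed. So the correct inequality is $I'\ge c\,\rho^{-\gamma}I^p$, not $\rho^{-\gamma-N(p-1)}I^p$.

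Your subcritical arithmetic is also inconsistent with your own granted bound. With exponent $\frac{\gamma+N(p-1)}{2s}$, $I(0)\gtrsim\varepsilon R^{-N}$ and $R^{2s}\sim T$, the constraint is $\varepsilon^{p-1}T^{(p-1)\beta-N(p-1)/(2s)}\lesssim1$, not $\varepsilon^{p-1}T^{(p-1)\beta}\lesssim1$. The factor $\rho^{-N(p-1)}$ is counted twice: once in the differential inequality and once through $I(0)^{-(p-1)}$. With the correct power $\rho^{-\gamma}$, your computation does give $T_\varepsilon\lesssim\varepsilon^{-1/\beta}$.

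\textbf{The critical case.} Here the argument fails more seriously. Since $\mathscr{K}_s\le\|\mathscr{K}_s\|_{L^\infty}$, one has $|I(0)|\le\varepsilon\|u_0\|_{L^1}\|\mathscr{K}_s\|_{L^\infty}(R^{2s}+T)^{-N/(2s)}$. So $I(0)\gtrsim\varepsilon R^{-N}$ is false for fixed $R$ and large $T$. With the correct ingredients, integrating $I'\ge c\rho^{-\gamma}I^p$ over $[0,T]$ yields only the necessary condition
\[
\varepsilon^{p-1}(R^{2s}+T)^{-\frac{N(p-1)}{2s}}\,
\frac{(R^{2s}+T)^{1-\frac{\gamma}{2s}}-R^{2s-\gamma}}{1-\frac{\gamma}{2s}}\lesssim1 .
\]
At $p=p_F$ one has $\frac{N(p-1)}{2s}=1-\frac{\gamma}{2s}$, so the left side is $O(\varepsilon^{p-1})$ uniformly in $R,T$ and is never violated for small $\varepsilon$. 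Thus no single backward kernel forces blow-up, and your $\log T$ is an artifact of the two errors above.

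As the paper notes, you must superpose the kernels over a band of scales, $\int_{b_*}^{b}G_s(r-t,\cdot)\,\frac{dr}{r}$, in the manner of Ikeda--Sobajima. Each of the $\asymp\log(b/b_*)$ dyadic scales then contributes a critically balanced amount. The integrated inequality acquires the extra factor $\log(b/b_*)$, which forces $\log(T_\varepsilon/b_*)\lesssim\varepsilon^{-(p-1)}$.

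\textbf{A smaller point in the lower bound.} Splitting at the fixed radius $|x|=1$ near $\tau=t$ loses a factor $(1+t)^{\gamma/(2s)}$, since on $|x|\ge1$ you only use $|x|^{-\gamma}\le1$. For $\gamma>0$ this would spoil sharpness, and also the critical and supercritical conclusions. Split at $|x|\sim t^{1/(2s)}$ instead. Alternatively, apply the interpolation lemma to $|x|^{-\gamma r}|u|^{pr}$ with $\frac{N}{2s}<r<\frac{N}{\gamma}$, which gives $\||x|^{-\gamma}|u|^p\|_{L^r}\lesssim\|u\|_{L^\infty}^{p-\frac1r+\frac{\gamma}{N}}\|u\|_{L^1}^{\frac1r-\frac{\gamma}{N}}$ and hence the correct decay.
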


\begin{rem}\rm
For $\gamma=0$ formula~\eqref{eq:beta} reduces to
$\beta=\tfrac{1}{p-1}-\tfrac{N}{2s}$, recovering the fractional
Lee--Ni exponent, and for $s=1,\gamma=0$ the classical Lee--Ni estimate
\cite{LeeNi1992}. The exponent~\eqref{eq:beta} vanishes precisely at
$p=p_F$, consistently with the exponential lifespan in the critical case.
\end{rem}

\begin{rem}\rm
The natural ``Lee--Ni-like'' guess
$\bigl(\tfrac{1}{p-1}-\tfrac{N}{2s-\gamma}\bigr)^{-1}$ --- obtained by
formally replacing $2s$ with $2s-\gamma$ in the Lee--Ni formula --- is
\emph{not} the correct sharp rate when $\gamma>0$. The two expressions
agree only at $p=p_F$ (both vanish) and at $\gamma=0$. The discrepancy
stems from the fact that the Hardy weight enters the integrability of the
nonlinearity \emph{additively} (through the exponent $\alpha$
in~\eqref{eq:alpha-def}), while the underlying linear semigroup retains the
unweighted decay $\|e^{-t\Lap}\|_{L^1\to L^\infty}\le C t^{-N/(2s)}$.
\end{rem}

\subsection*{Strategy and structure}
The lower bound on $T_\varepsilon$ is obtained by a bootstrap based on the
auxiliary functional
\[
  M(t)\;=\;\sup_{0\le\tau<t}\Bigl\{(1+\tau)^{N/(2s)}\|u(\tau)\|_{L^\infty}
                                  +\|u(\tau)\|_{L^1}\Bigr\},
\]
the sharp $L^p$--$L^q$ smoothing of the fractional heat semigroup, and the
weighted interpolation
\(
\||x|^{-\gamma}|u|^p\|_{L^1}\lesssim
\|u\|_{L^\infty}^{p-1+\gamma/N}\|u\|_{L^1}^{1-\gamma/N}
\)
(Lemma~\ref{lem:hardy}). The upper bound is established by testing the
equation against the \emph{backward fractional heat kernel}
$G_s(b-t,\cdot)$: self-adjointness of $\Lap$ makes the linear terms cancel
identically, a H\"older rearrangement neutralizes the weight, and the
problem reduces to a scalar differential inequality. Crucially, this uses a
globally defined, polynomially decaying test function, thereby avoiding
compactly supported cutoffs, which are incompatible with the nonlocal
operator (Section~\ref{sec:upper}).

The remainder of the paper is organized as follows.
Section~\ref{sec:prelim} presents the preliminary tools needed throughout the
paper, including heat-kernel estimates and a weighted interpolation
inequality. In Section~\ref{sec:local}, we establish local well-posedness
and the corresponding blow-up alternative. Sections~\ref{sec:lower}
and~\ref{sec:upper} are devoted to the proof of the lower and upper lifespan
bounds stated in~\eqref{eq:lifespan}. Section~\ref{sec:largedata} addresses
the large-data case and derives the corresponding non-decaying lifespan law.
In Section~\ref{sec:typeI}, we characterize the blow-up rate for
nonnegative solutions, while Section~\ref{sec:profile} is dedicated to the
analysis of the self-similar blow-up profile. Finally,
Section~\ref{sec:concluding} concludes the paper with several remarks and
perspectives.
\subsection*{Notation}
We write $A\lesssim B$ if $A\le CB$ for some constant $C>0$ independent
of the parameters $\varepsilon,t,u$ in question; $A\approx B$ means
$A\lesssim B$ and $B\lesssim A$, and $A\asymp B$ is used for two-sided
pointwise bounds. The generic constant $C$ may change from line to line.
We set $a\wedge b:=\min(a,b)$ and $\langle x\rangle:=(1+|x|^2)^{1/2}$, and
write $p'=p/(p-1)$ for the H\"older conjugate.

\section{Preliminaries}\label{sec:prelim}

\subsection{The fractional heat kernel}
The semigroup $\{e^{-t\Lap}\}_{t\ge 0}$ generated by $\Lap$, $s\in(0,1)$,
acts on Schwartz functions $f$ as
\(
  e^{-t\Lap}f(x)=(G_s(t,\cdot)\ast f)(x),
\)
where the kernel $G_s$ is positive, radial, smooth in
$(t,x)\in(0,\infty)\times\R^N$, and obeys the parabolic scaling
\begin{equation}\label{eq:kernel-scaling}
  G_s(t,x)\;=\;t^{-N/(2s)}\,\mathscr{K}_s\!\bigl(t^{-1/(2s)}x\bigr),
\end{equation}
with profile
\begin{equation}\label{eq:K-s-fourier}
  \mathscr{K}_s(x)\;=\;(2\pi)^{-N}\int_{\R^N}e^{i x\cdot\xi}\,e^{-|\xi|^{2s}}\,d\xi.
\end{equation}
For $s=1/2$, $\mathscr{K}_s$ is the Poisson kernel
\begin{equation}\label{eq:Poisson}
  \mathscr{K}_{1/2}(x)\;=\;
  \frac{\Gamma(\tfrac{N+1}{2})}{\pi^{(N+1)/2}}\,(1+|x|^2)^{-\frac{N+1}{2}},
\end{equation}
so that
\(
  G_{1/2}(t,x)
  =\frac{\Gamma((N+1)/2)\,t}{\pi^{(N+1)/2}(t^2+|x|^2)^{(N+1)/2}}.
\)
For general $s\in(0,1)$ the profile has no closed form, but admits the
sharp two-sided bound below.

\begin{lem}\label{lem:Ks-bounds}
For every $s\in(0,1)$ there exist $c_1,c_2>0$ such that
\begin{equation}\label{eq:Ks-bounds}
  c_1\,\langle x\rangle^{-N-2s}\;\le\;\mathscr{K}_s(x)\;\le\;c_2\,\langle x\rangle^{-N-2s},
  \qquad x\in\R^N.
\end{equation}
In particular $\mathscr{K}_s\in L^r(\R^N)$ for every $r\in[1,\infty]$.
\end{lem}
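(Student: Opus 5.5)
We need two-sided bounds $\mathscr{K}_s(x)\asymp\langle x\rangle^{-N-2s}$. The plan is to split the statement into a local part, on a ball $|x|\le R$, and an asymptotic part, $|x|\ge R$, and to prove each separately.

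\emph{Local part.} Since $e^{-|\xi|^{2s}}\in L^1$, the profile $\mathscr{K}_s$ is continuous and bounded, with $\mathscr{K}_s(x)\le (2\pi)^{-N}\int e^{-|\xi|^{2s}}d\xi$. Hence the upper bound on any ball is immediate.

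For positivity on compact sets we use subordination (Bochner). For $0<s<1$ there is a probability density $\eta_s(t,\cdot)$ on $(0,\infty)$, the one-sided $s$-stable law, with $\int_0^\infty e^{-\lambda\tau}\eta_s(1,\tau)\,d\tau=e^{-\lambda^s}$. Taking $\lambda=|\xi|^2$ and inverting the Fourier transform gives
\[
\mathscr{K}_s(x)=\int_0^\infty (4\pi\tau)^{-N/2}e^{-|x|^2/(4\tau)}\,\eta_s(1,\tau)\,d\tau .
\]
This representation shows that $\mathscr{K}_s>0$ everywhere, and it is continuous. So on the compact ball $\{|x|\le R\}$ we get $\mathscr{K}_s\ge c>0$, and there $\langle x\rangle^{-N-2s}\asymp 1$.

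\emph{Asymptotic part.} We need $\mathscr{K}_s(x)\sim A|x|^{-N-2s}$ as $|x|\to\infty$ for some $A>0$; by continuity this combines with the local part to give \eqref{eq:Ks-bounds}. The main input is the tail of the subordinator density, $\eta_s(1,\tau)\sim \frac{s}{\Gamma(1-s)}\tau^{-1-s}$ as $\tau\to\infty$. This follows by a Tauberian argument from $1-e^{-\lambda^s}\sim\lambda^s$ as $\lambda\to0$; since a Karamata/monotone-density step is needed to pass to the density itself, it is cleaner to quote the classical asymptotics of stable densities (Pollard/Blumenthal--Getoor). We also have $\eta_s(1,\tau)\le C\tau^{-1-s}$ for all $\tau$, since $\eta_s(1,\cdot)$ is bounded and decays rapidly as $\tau\to0$.

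Substituting $\tau=|x|^2\sigma$ in the representation gives
\[
\mathscr{K}_s(x)=|x|^{2-N}\int_0^\infty(4\pi\sigma)^{-N/2}e^{-1/(4\sigma)}\,\eta_s(1,|x|^2\sigma)\,d\sigma .
\]
We have $|x|^{2+2s}\eta_s(1,|x|^2\sigma)\to \frac{s}{\Gamma(1-s)}\sigma^{-1-s}$ pointwise, and the dominating function $C\sigma^{-1-s}\sigma^{-N/2}e^{-1/(4\sigma)}$ is integrable on $(0,\infty)$ because $N/2+1+s>1$. Dominated convergence then yields
\[
|x|^{N+2s}\mathscr{K}_s(x)\to A:=\frac{s}{\Gamma(1-s)}\int_0^\infty(4\pi\sigma)^{-N/2-1-s}4\pi\,e^{-1/(4\sigma)}\,d\sigma\in(0,\infty).
\]
The constant $A$ is a finite Gamma-function expression.

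The main obstacle is exactly this tail asymptotics of $\eta_s$, in particular the uniform bound $\eta_s(1,\tau)\lesssim\tau^{-1-s}$. A purely Fourier-analytic alternative, applied if needed, is as follows. For the upper bound, integrate by parts in \eqref{eq:K-s-fourier} and use that $e^{-|\xi|^{2s}}-1+|\xi|^{2s}$ has the smoothness needed. Alternatively, write $|x|^{N+2s}$ decay via the Blumenthal--Getoor result for symmetric stable densities, from which both bounds follow directly.

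Finally, $\mathscr{K}_s\in L^r$ for every $r\in[1,\infty]$ follows from the upper bound, since $\langle x\rangle^{-(N+2s)r}$ is integrable for all $r\ge1$.
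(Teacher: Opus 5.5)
Your proposal is correct and follows the route the paper points to. The paper derives only positivity from the subordination formula and cites Blumenthal--Getoor for the two-sided bound. You derive the $|x|^{-N-2s}$ tail from that same representation, reducing it to the one-dimensional asymptotics $\eta_s(1,\tau)\sim\frac{s}{\Gamma(1-s)}\tau^{-1-s}$ plus dominated convergence, and this is sound.

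The only slip is in the limit constant. It should be $A=\frac{s}{\Gamma(1-s)}(4\pi)^{-N/2}\int_0^\infty\sigma^{-\frac N2-1-s}e^{-1/(4\sigma)}\,d\sigma=\frac{s\,4^{s}\,\Gamma(\frac N2+s)}{\pi^{N/2}\,\Gamma(1-s)}=c_{N,s}$. Your expression carries a spurious factor $(4\pi)^{-s}$, which is harmless since only $0<A<\infty$ is used.
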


\begin{proof}
The result is classical; see, e.g.,
Blumenthal--Getoor~\cite[Thm.~2.1]{BG1960},
Miao--Yuan--Zhang~\cite[Lemma~2.1]{MYZ2008}, or
Peral Alonso--Soria de Diego~\cite[p.~395]{Alonso2021}. The main
ingredient is the subordination formula
\begin{equation}\label{eq:subordination}
  \mathscr{K}_s(x)
  =
  \int_0^\infty (4\pi t)^{-N/2}
  e^{-|x|^2/(4t)}\,\chi_s(t)\,dt,
\end{equation}
where $\chi_s \ge 0$ is the inverse Laplace transform of
$\lambda \mapsto e^{-\lambda^s}$. Since the Gaussian kernel is positive and
$\chi_s\ge0$, the positivity of $\mathscr{K}_s$ follows immediately.
\end{proof}

The next two facts are workhorses of the paper.

\begin{prop}[$L^p$--$L^q$ smoothing]\label{prop:Lp-Lq}
For all $1\le p\le q\le\infty$ there exists $C>0$ such that
\begin{equation}\label{eq:Lp-Lq}
   \|e^{-t\Lap}f\|_{L^q(\R^N)}
   \;\le\; C\,t^{-\frac{N}{2s}\!\left(\frac1p-\frac1q\right)}\,
            \|f\|_{L^p(\R^N)},
   \qquad t>0,\ f\in L^p(\R^N).
\end{equation}
\end{prop}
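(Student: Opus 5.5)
The plan is to apply Young's convolution inequality to the representation $e^{-t\Lap}f=G_s(t,\cdot)\ast f$, and to read off the time dependence from the scaling relation \eqref{eq:kernel-scaling}. Given $1\le p\le q\le\infty$, define $r\in[1,\infty]$ by
\[
  1+\frac1q=\frac1p+\frac1r,\qquad\text{that is,}\qquad \frac1r=1-\Bigl(\frac1p-\frac1q\Bigr)\in[0,1].
\]
Young's inequality then gives
\[
  \|e^{-t\Lap}f\|_{L^q}\le \|G_s(t,\cdot)\|_{L^r}\,\|f\|_{L^p}.
\]
First I establish the inequality for Schwartz $f$, where the convolution formula holds by definition. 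It extends to all $f\in L^p$ by density when $p<\infty$. When $p=q=\infty$, I take $e^{-t\Lap}f$ to mean the convolution itself, which is well defined because $G_s(t,\cdot)\in L^1$.

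Next I compute $\|G_s(t,\cdot)\|_{L^r}$ via \eqref{eq:kernel-scaling}. With the substitution $y=t^{-1/(2s)}x$, so that $dx=t^{N/(2s)}dy$, one gets for $r<\infty$
\[
  \|G_s(t,\cdot)\|_{L^r}^r=t^{-\frac{Nr}{2s}}\int_{\R^N}\mathscr K_s\bigl(t^{-1/(2s)}x\bigr)^r\,dx
  =t^{-\frac{N}{2s}(r-1)}\,\|\mathscr K_s\|_{L^r}^r .
\]
Taking $r$-th roots,
\[
  \|G_s(t,\cdot)\|_{L^r}=t^{-\frac{N}{2s}\left(1-\frac1r\right)}\|\mathscr K_s\|_{L^r}.
\]
This formula also holds directly for $r=\infty$. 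By the choice of $r$, the exponent $1-\tfrac1r$ equals $\tfrac1p-\tfrac1q$, so we obtain \eqref{eq:Lp-Lq} with $C=\|\mathscr K_s\|_{L^r}$.

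The only substantive input is the finiteness of $C=\|\mathscr K_s\|_{L^r}$ for every $r\in[1,\infty]$. This is exactly the last assertion of Lemma~\ref{lem:Ks-bounds}, which follows from the upper bound $\mathscr K_s\le c_2\langle x\rangle^{-N-2s}$: the right-hand side is bounded and integrable, since $N+2s>N$, and hence lies in every $L^r$. The rest of the argument is routine bookkeeping of exponents, so this step, already settled by Lemma~\ref{lem:Ks-bounds}, is the only point requiring care.
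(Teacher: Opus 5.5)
Your proof is correct and matches the paper's argument: Young's inequality with $1/r=1+1/q-1/p$, the scaling identity $\|G_s(t,\cdot)\|_{L^r}=t^{-\frac{N}{2s}(1-\frac1r)}\|\mathscr{K}_s\|_{L^r}$, and finiteness of $\|\mathscr{K}_s\|_{L^r}$ from Lemma~\ref{lem:Ks-bounds}. Your remarks on extending from Schwartz functions by density and on the case $p=q=\infty$ are harmless additions that the paper leaves implicit.
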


\begin{proof}
By Young's convolution inequality with $1/r=1+1/q-1/p\in[1/q,1]$, we have
\[
  \|e^{-t\Lap}f\|_{L^q}\le \|G_s(t,\cdot)\|_{L^r}\,\|f\|_{L^p}.
\]
From~\eqref{eq:kernel-scaling},
\(
  \|G_s(t,\cdot)\|_{L^r}=t^{-\frac{N}{2s}(1-\frac1r)}\,\|\mathscr{K}_s\|_{L^r},
\)
and $\mathscr{K}_s\in L^r$ by Lemma~\ref{lem:Ks-bounds}. Since
$1-1/r=1/p-1/q$, this is the claim.
\end{proof}

\begin{prop}[Positivity, contraction, mass]\label{prop:contraction}
For every $t\ge 0$, $e^{-t\Lap}$ maps nonnegative functions to nonnegative
functions, $\|e^{-t\Lap}f\|_{L^q}\le\|f\|_{L^q}$ for $1\le q\le\infty$, and
$\int_{\R^N}G_s(t,x)\,dx=1$.
\end{prop}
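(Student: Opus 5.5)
The three assertions follow from properties of the kernel $G_s$ already recorded: positivity from Lemma~\ref{lem:Ks-bounds}, the mass identity from the Fourier representation~\eqref{eq:K-s-fourier}, and the contraction from Young's inequality combined with these two. I will work first with $f$ in the Schwartz class, where $e^{-t\Lap}f=G_s(t,\cdot)\ast f$, and then extend by density for $q<\infty$. For $q=\infty$ I will instead use the convolution formula directly, since it makes sense for every $f\in L^\infty$ because $G_s(t,\cdot)\in L^1$. The case $t=0$ is the identity and is trivial, so I assume $t>0$ throughout.

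\emph{Positivity.} By~\eqref{eq:kernel-scaling} and the lower bound in~\eqref{eq:Ks-bounds}, $G_s(t,x)>0$ for all $x$. Alternatively, the subordination formula~\eqref{eq:subordination} with $\chi_s\ge0$ gives the same conclusion. Hence for $f\ge0$,
\[
(G_s(t,\cdot)\ast f)(x)=\int_{\R^N} G_s(t,x-y)f(y)\,dy\ \ge\ 0 .
\]

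\emph{Mass.} The change of variables $x=t^{1/(2s)}z$ in~\eqref{eq:kernel-scaling} gives $\int G_s(t,x)\,dx=\int\mathscr K_s(z)\,dz$. Since $\mathscr K_s\in L^1$ by Lemma~\ref{lem:Ks-bounds}, this integral equals the Fourier transform of $\mathscr K_s$ at $\xi=0$. By~\eqref{eq:K-s-fourier} and Fourier inversion, that transform is $e^{-|\xi|^{2s}}$, which equals $1$ at $\xi=0$. The inversion is legitimate because $e^{-|\xi|^{2s}}$ is integrable and continuous, and $\mathscr K_s$ is integrable.

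\emph{Contraction.} Young's inequality with exponent $r=1$ gives $\|G_s(t,\cdot)\ast f\|_{L^q}\le\|G_s(t,\cdot)\|_{L^1}\|f\|_{L^q}$. By positivity, $\|G_s(t,\cdot)\|_{L^1}=\int G_s(t,\cdot)\,dx$, and this equals $1$ by the mass step. Equivalently, one can apply Jensen's inequality against the probability measure $G_s(t,x-y)\,dy$.

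The only step needing care is the mass identity, specifically justifying the evaluation of the Fourier transform at $\xi=0$. This is settled by the integrability of $\mathscr K_s$ from Lemma~\ref{lem:Ks-bounds}. Everything else is routine.
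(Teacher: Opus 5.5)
Your proposal is correct and follows the paper's proof essentially verbatim: positivity from $G_s>0$ via Lemma~\ref{lem:Ks-bounds}, the mass identity from $\widehat{G_s}(t,0)=1$, and the contraction from Young's inequality with $r=1$. You also spell out two steps the paper leaves implicit, and both are correct: the identity $\|G_s(t,\cdot)\|_{L^1}=\int G_s(t,\cdot)\,dx=1$, and the justification of evaluating the Fourier transform at $\xi=0$ via integrability of $\mathscr{K}_s$.
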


\begin{proof}
Positivity follows from $G_s\ge 0$ (Lemma~\ref{lem:Ks-bounds}); the mass
identity from $\widehat{G_s}(t,0)=e^{-t|0|^{2s}}=1$; and the contraction
from Young's inequality with $r=1$ in~\eqref{eq:Lp-Lq}.
\end{proof}

\subsection{A weighted interpolation inequality}

For $0\le\gamma<N$ the weight $|x|^{-\gamma}$ is locally integrable, and the
nonlinearity in~\eqref{eq:main} is controlled by a one-line interpolation
between $L^1$ and $L^\infty$.

\begin{lem}[Weighted interpolation]\label{lem:hardy}
Let $0\le\gamma<N$ and $p>1$. Then for every
$u\in L^1(\R^N)\cap L^\infty(\R^N)$,
\begin{equation}\label{eq:hardy-ineq}
  \int_{\R^N}|x|^{-\gamma}|u(x)|^p\,dx
  \;\le\; C\,\|u\|_{L^\infty}^{\,p-1+\gamma/N}\,
              \|u\|_{L^1}^{\,1-\gamma/N},
  \qquad C=C(N,\gamma,p)>0.
\end{equation}
\end{lem}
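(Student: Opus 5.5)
We split the integral at a radius $R>0$, to be chosen at the end, into the ball $B_R=\{|x|<R\}$ and its complement, and then optimize in $R$. This is the usual ``one-line'' interpolation, and the only ingredient is the local integrability of $|x|^{-\gamma}$ for $\gamma<N$. The case $u\equiv 0$ is trivial, so we assume $\|u\|_{L^1}>0$ and hence $\|u\|_{L^\infty}>0$.

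\emph{Inside the ball.} We bound $|u|^p\le\|u\|_{L^\infty}^p$ and use $\gamma<N$ to get
\[
\int_{B_R}|x|^{-\gamma}|u|^p\,dx\le \|u\|_{L^\infty}^p\,\frac{|S^{N-1}|}{N-\gamma}\,R^{N-\gamma}.
\]
\emph{Outside the ball.} Here $|x|^{-\gamma}\le R^{-\gamma}$ and $|u|^p\le\|u\|_{L^\infty}^{p-1}|u|$, so
\[
\int_{|x|\ge R}|x|^{-\gamma}|u|^p\,dx\le R^{-\gamma}\,\|u\|_{L^\infty}^{p-1}\|u\|_{L^1}.
\]

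\emph{Choice of $R$.} We balance the two bounds. Setting $\|u\|_{L^\infty}^pR^{N-\gamma}=R^{-\gamma}\|u\|_{L^\infty}^{p-1}\|u\|_{L^1}$ gives $R^N=\|u\|_{L^1}/\|u\|_{L^\infty}$, that is $R=(\|u\|_{L^1}/\|u\|_{L^\infty})^{1/N}$. Substituting into the second term yields
\[
\|u\|_{L^\infty}^{p-1}\|u\|_{L^1}\Big(\frac{\|u\|_{L^\infty}}{\|u\|_{L^1}}\Big)^{\gamma/N}=\|u\|_{L^\infty}^{p-1+\gamma/N}\|u\|_{L^1}^{1-\gamma/N}.
\]
The first term gives the same quantity multiplied by $|S^{N-1}|/(N-\gamma)$. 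Therefore \eqref{eq:hardy-ineq} holds with $C=1+|S^{N-1}|/(N-\gamma)$.

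There is no real obstacle. The only point needing care is that the constant blows up as $\gamma\to N$, which is why the hypothesis $\gamma<N$ is required. For $\gamma=0$ the estimate reduces to the standard interpolation $\|u\|_{L^p}^p\le\|u\|_{L^\infty}^{p-1}\|u\|_{L^1}$.
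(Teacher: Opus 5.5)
Your proof is correct and is essentially the paper's own argument: you split at radius $R$, bound $|u|^p\le\|u\|_{L^\infty}^p$ on the ball (using $\gamma<N$) and $|x|^{-\gamma}|u|^p\le R^{-\gamma}\|u\|_{L^\infty}^{p-1}|u|$ outside, then balance with $R^N=\|u\|_{L^1}/\|u\|_{L^\infty}$. Your separate treatment of the trivial case $u\equiv0$ and the explicit constant $C=1+|S^{N-1}|/(N-\gamma)$ are small details that the paper leaves implicit.
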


\begin{proof}
Set $L:=\|u\|_{L^1}$, $M:=\|u\|_{L^\infty}$ and split, for $R>0$,
\begin{align*} I:&=\int_{\R^N}|x|^{-\gamma}|u|^p
   \\&=\int_{|x|<R}+\int_{|x|\ge R}\\&=:I_1+I_2.\end{align*}

Using $|u|^p\le M^p$ and $\gamma<N$,
$$ I_1\le M^p\int_{|x|<R}|x|^{-\gamma}=\tfrac{\omega_{N-1}}{N-\gamma}M^pR^{N-\gamma};$$
using $|u|^p\le M^{p-1}|u|$,
$$
  I_2\le R^{-\gamma}M^{p-1}L.$$
Balancing with $R^N=L/M$ gives
$R^{N-\gamma}=L^{1-\gamma/N}M^{-1+\gamma/N}$,
$R^{-\gamma}=L^{-\gamma/N}M^{\gamma/N}$, whence
$I\le C\,M^{p-1+\gamma/N}L^{1-\gamma/N}$.
\end{proof}

\begin{rem}\rm
Inequality~\eqref{eq:hardy-ineq} also follows from H\"older's inequality in
Lorentz spaces, using $|x|^{-\gamma}\in L^{N/\gamma,\infty}(\R^N)$;
see~\cite[Sect.~1.4]{Grafakos2014}. Under~\eqref{eq:standing} we have
$\gamma<\min(2s,N)\le N$, so the lemma applies throughout.
\end{rem}

The scaling exponent appearing repeatedly below is
\begin{equation}\label{eq:alpha-def}
  \alpha\;:=\;\frac{N(p-1)+\gamma}{2s}.
\end{equation}
Observe that $\alpha=1\iff p=p_F$, and subcritical $\iff\alpha<1$.

\section{Local well-posedness}\label{sec:local}

\begin{defn}[Mild solution]\label{def:mild}
Given $T>0$ and $u_0\in L^1\cap L^\infty$, a function
\(
  u\in C\bigl([0,T];L^1(\R^N)\cap L^\infty(\R^N)\bigr)
\)
is a \emph{mild solution} of~\eqref{eq:main} on $[0,T]$ if
\begin{equation}\label{eq:duhamel}
   u(t)\;=\;\varepsilon\,e^{-t\Lap}u_0
   \;+\;\int_0^t e^{-(t-\tau)\Lap}\bigl(|x|^{-\gamma}|u(\tau)|^p\bigr)\,d\tau,
   \qquad 0\le t\le T.
\end{equation}
The \emph{lifespan} $T_\varepsilon$ is the supremum of $T>0$ for which a
unique mild solution exists on $[0,T]$.
\end{defn}

\begin{theo}[Local existence and blow-up alternative]\label{thm:local}
For every $u_0\in L^1\cap L^\infty$ and $\varepsilon>0$ there exist
$T=T(\varepsilon,\|u_0\|_{L^1\cap L^\infty})>0$ and a unique mild solution
$u\in C([0,T];L^1\cap L^\infty)$ of~\eqref{eq:main}. Moreover, there is a
maximal existence time $T_{\max}=T_{\max}(\varepsilon u_0)\le\infty$ such
that $u\in C([0,T_{\max});L^1\cap L^\infty)$, and either $T_{\max}=\infty$
or
\begin{equation}\label{eq:blowup-alt}
  \lim_{t\to T_{\max}^-}\bigl(\|u(t)\|_{L^\infty(\R^N)}+\|u(t)\|_{L^1(\R^N)}\bigr)
  =+\infty \,\,(T_{\max}^-<\infty).
\end{equation}
\end{theo}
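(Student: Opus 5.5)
We argue by a Banach fixed point in the space $X_T:=C([0,T];L^1\cap L^\infty)$ with norm $\|u\|_{X_T}:=\sup_{0\le t\le T}(\|u(t)\|_{L^1}+\|u(t)\|_{L^\infty})$. Define
\[
  \Phi(u)(t):=\varepsilon e^{-t\Lap}u_0+\int_0^t e^{-(t-\tau)\Lap}\bigl(|x|^{-\gamma}|u(\tau)|^p\bigr)\,d\tau ,
\]
and work on the closed ball $B_R=\{u\in X_T:\|u\|_{X_T}\le R\}$ with $R:=2\varepsilon\|u_0\|_{L^1\cap L^\infty}$. The linear term is bounded by $\varepsilon\|u_0\|_{L^1\cap L^\infty}$ by Proposition~\ref{prop:contraction}.

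For the Duhamel term, the weight is unbounded, so the $L^\infty$ estimate must use smoothing. Split $|x|^{-\gamma}=|x|^{-\gamma}\mathbf 1_{|x|<1}+|x|^{-\gamma}\mathbf 1_{|x|\ge1}$.
\begin{itemize}
  \item The far part is bounded by $|u|^p$, which lies in $L^1\cap L^\infty$ with norm at most $R^p$.
  \item The near part lies in $L^r$ for any $1\le r<N/\gamma$, with norm at most $CR^p$.
  \item Since $\gamma<N$, the $L^1$ bound is immediate.
  \item For $L^\infty$, apply Proposition~\ref{prop:Lp-Lq} from $L^r$ to $L^\infty$. This gives a factor $(t-\tau)^{-N/(2sr)}$, and we choose $r$ so that $N/(2sr)<1$, i.e.\ $r>N/(2s)$.
\end{itemize}
Such an $r$ exists because $\gamma<2s$ gives $N/(2s)<N/\gamma$. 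Then $\int_0^t(t-\tau)^{-N/(2sr)}d\tau\lesssim T^{1-N/(2sr)}$. This yields
\[
  \|\Phi(u)\|_{X_T}\le \varepsilon\|u_0\|+C(T+T^{\theta})R^p,\qquad \theta:=1-\tfrac{N}{2sr}\in(0,1).
\]

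The difference estimate follows the same way from $\bigl||u|^p-|v|^p\bigr|\le p(|u|^{p-1}+|v|^{p-1})|u-v|$, giving Lipschitz constant $C(T+T^\theta)R^{p-1}$. Choosing $T$ small in terms of $R$ makes $\Phi$ a contraction of $B_R$.

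Time continuity into $L^1\cap L^\infty$ is the step I expect to be most delicate. The semigroup is not strongly continuous on $L^\infty$, so $t\mapsto e^{-t\Lap}u_0$ need not be continuous in $L^\infty$ at $t=0$. I would handle it as follows.
\begin{itemize}
  \item For $t>0$ and for the Duhamel term, continuity follows from the smoothing estimates together with continuity of translations in $L^1$, giving $\|G_s(t+h)-G_s(t)\|_{L^1}\to0$.
  \item At $t=0$, either interpret $L^\infty$-continuity as weak-$*$ continuity, or replace $L^\infty$ by the closure of the data in a space where the semigroup is strongly continuous (e.g.\ bounded uniformly continuous functions).
\end{itemize}
I would state this caveat explicitly.

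Uniqueness in $X_T$ (not only in $B_R$) comes from the same Lipschitz estimate applied on a short interval with $R$ the larger of the two norms, followed by a continuation argument.

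For the maximal time and the blow-up alternative, set $T_{\max}$ to be the supremum of existence times; uniqueness lets the solutions glue together. The existence time $T$ depends only on an upper bound for $\|u(t_0)\|_{L^1\cap L^\infty}$. Suppose $T_{\max}<\infty$ but $\|u(t_n)\|$ stays bounded along a sequence $t_n\to T_{\max}$. Restarting from $t_n$ then extends the solution beyond $T_{\max}$, a contradiction. Hence $\liminf_{t\to T_{\max}}$ of the norm is infinite, which is the full limit~\eqref{eq:blowup-alt}.
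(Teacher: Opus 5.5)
Your proposal is correct and follows essentially the paper's route. Both arguments use a Banach fixed point in $L^1\cap L^\infty$, bound the Duhamel term in $L^\infty$ through $L^r\to L^\infty$ smoothing with $r\in(N/(2s),N/\gamma)$, and obtain the blow-up alternative by restarting from times $t_m\to T_{\max}$ at which the norm stays bounded; your near/far splitting of the weight is only a cosmetic variant of the paper's use of Lemma~\ref{lem:hardy} together with its split of $|x|^{-\gamma r}$.

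Your caveat about continuity at $t=0$ is well founded, and the paper's proof passes over it, asserting only that continuity follows from \eqref{eq:duhamel}. For data such as $u_0=\mathbf{1}_{B_1}$ one has $\|u(t)-\varepsilon u_0\|_{L^\infty}\ge\varepsilon/2-o(1)$ as $t\to0^+$, because $e^{-t(-\Delta)^s}u_0$ is continuous while $u_0$ is not. So the literal claim $u\in C([0,T];L^\infty)$ fails, and a weakening like the one you propose is genuinely needed: weak-$*$ continuity at $t=0$, or continuity in $L^1$ at $t=0$ and in $L^\infty$ on $(0,T]$.
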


The only delicate point is that $|x|^{-\gamma}|u|^p\notin L^\infty$, so the
$L^\infty$-norm of $\Phi(u)(t)$ is controlled through an $L^r$-to-$L^\infty$
smoothing for some $r\in(N/(2s),N/\gamma)$, not through an
$L^\infty\to L^\infty$ contraction.

\begin{proof}
Fix
\begin{equation}\label{eq:choice-r}
   \frac{N}{2s}\;<\;r\;<\;\frac{N}{\gamma}
   \qquad\text{(non-empty since $\gamma<2s$).}
\end{equation}
For $T,K>0$ set
\[
  X_{T,K}=\bigl\{u\in C([0,T];L^1\cap L^\infty):\|u\|_{X_T}\le K\bigr\},
  \quad
  \|u\|_{X_T}=\sup_{0\le t\le T}\!\bigl(\|u(t)\|_{L^1}+\|u(t)\|_{L^\infty}\bigr),
\]
with the natural metric, and define
$$\Phi(u)(t)=\varepsilon e^{-t\Lap}u_0
  +\int_0^t e^{-(t-\tau)\Lap}(|x|^{-\gamma}|u(\tau)|^p)\,d\tau.$$

\emph{Step 1 (nonlinearity in $L^1$ and $L^r$).}
By Lemma~\ref{lem:hardy},
\begin{equation}\label{eq:nonlin-L1}\begin{split}
  \bigl\||x|^{-\gamma}|u|^p\bigr\|_{L^1}
  &\le C\|u\|_{L^\infty}^{p-1+\gamma/N}\|u\|_{L^1}^{1-\gamma/N}\\&\le C K^p.\end{split}
\end{equation}
For the $L^r$-norm, split $|x|^{-\gamma r}$ over $|x|<1$ (integrable, since
$\gamma r<N$ by~\eqref{eq:choice-r}) and $|x|\ge1$ (bounded by $1$):
\begin{align*}  \bigl\||x|^{-\gamma}|u|^p\bigr\|_{L^r}^{r}
  &\le\|u\|_{L^\infty}^{pr}\!\!\int_{|x|<1}\!|x|^{-\gamma r}
   +\|u\|_{L^\infty}^{pr-1}\!\int_{|x|\ge1}\!|u|
   \\&\le C K^{pr},\end{align*}
hence
\begin{equation}\label{eq:nonlin-Lr}
  \bigl\||x|^{-\gamma}|u|^p\bigr\|_{L^r}\le C K^p.
\end{equation}

\emph{Step 2 ($L^1$ estimate).}
By~\eqref{eq:duhamel}, Proposition~\ref{prop:contraction}
and~\eqref{eq:nonlin-L1},
$$\|\Phi(u)(t)\|_{L^1}\le\varepsilon\|u_0\|_{L^1}+C\,T\,K^p.$$

\emph{Step 3 ($L^\infty$ estimate).}
Since $\|e^{-t\Lap}u_0\|_{L^\infty}\le\|u_0\|_{L^\infty}$ and, by
Proposition~\ref{prop:Lp-Lq} from $L^r$ to $L^\infty$
with~\eqref{eq:nonlin-Lr},
\[
  \Bigl\|e^{-(t-\tau)\Lap}\bigl(|x|^{-\gamma}|u(\tau)|^p\bigr)\Bigr\|_{L^\infty}
  \le C(t-\tau)^{-\frac{N}{2sr}}K^p,
  \qquad \tfrac{N}{2sr}<1,
\]
the time integral converges and
$$\|\Phi(u)(t)\|_{L^\infty}\le\varepsilon\|u_0\|_{L^\infty}
  +C\,T^{1-\frac{N}{2sr}}K^p.$$

\emph{Step 4 (choice of $K,T$).}
Combining,
$$  \|\Phi(u)\|_{X_T}\le 2\varepsilon\|u_0\|_{L^1\cap L^\infty}
  +C(T+T^{1-N/(2sr)})K^p.$$
Take $K=4\varepsilon\|u_0\|_{L^1\cap L^\infty}$ and $T$ small enough that
$C(T+T^{1-N/(2sr)})K^{p-1}\le1/2$; then $\Phi:X_{T,K}\to X_{T,K}$. With
$|a|^p-|b|^p\le p(|a|^{p-1}+|b|^{p-1})|a-b|$ and H\"older,
$$\|\Phi(u)-\Phi(v)\|_{X_T}\le C(T+T^{1-N/(2sr)})K^{p-1}\|u-v\|_{X_T},$$
a contraction by the same choice. Banach's fixed-point theorem yields a
unique mild solution; continuity in $t$ follows from~\eqref{eq:duhamel}.

\emph{Step 5 (maximal solution and blow-up alternative).}
By uniqueness, the solution extends to a maximal interval $[0,T_{\max})$,
\[
  T_{\max}:=\sup\bigl\{T>0:\ \eqref{eq:main}\text{ has a mild solution on }[0,T]\bigr\}
  \le\infty.
\]
Suppose $T_{\max}<\infty$ but
$$\liminf_{t\to T_{\max}^-}\|u(t)\|_{L^1\cap L^\infty}=:L<\infty.$$
Pick $t_m\to T_{\max}$ with $\sup_m\|u(t_m)\|_{L^1\cap L^\infty}\le L+1$.
The local theory, applied with datum $u(t_m)$, produces a uniform existence
time $T(L+1)>0$ depending only on $L+1$, so the solution continues on
$[t_m,t_m+T(L+1)]$ for every $m$; hence $T_{\max}\ge t_m+T(L+1)$. Letting
$m\to\infty$ gives $T_{\max}\ge T_{\max}+T(L+1)$, impossible. Thus
$\|u(t)\|_{L^1\cap L^\infty}\to\infty$ as $t\to T_{\max}^-$, which
is~\eqref{eq:blowup-alt}.
\end{proof}

\begin{rem}\rm
The interval~\eqref{eq:choice-r} is non-empty thanks to $\gamma<2s$. For
clarity the next two sections are phrased in terms of $L^1$ and $L^\infty$
only; the intermediate $L^r$ control is invoked where needed.
\end{rem}

\section{Lower lifespan estimate}\label{sec:lower}

We prove the lower bound in~\eqref{eq:lifespan}, following the strategy
of~\cite{LeeNi1992,TobakhanovTorebek} with the weight handled through
Lemma~\ref{lem:hardy}. Recall $\alpha$ from~\eqref{eq:alpha-def}.

\subsection{The auxiliary functional and the key a priori bound}
For a mild solution $u$ on $[0,T)$ set
\begin{equation}\label{eq:M}
  M(t):=\sup_{0\le\tau<t}\Bigl\{
     (1+\tau)^{N/(2s)}\,\|u(\tau)\|_{L^\infty}+\|u(\tau)\|_{L^1}\Bigr\},
  \qquad 0<t<T.
\end{equation}
By Theorem~\ref{thm:local}, $M$ is continuous and finite on $[0,T)$, with
$M(0^+)\le C\varepsilon\|u_0\|_{L^1\cap L^\infty}$.

\begin{lem}[A priori estimate]\label{lem:apriori}
There exist $C_0,C_1>0$ depending only on $N,s,\gamma,p,u_0$ such that, for
every $0\le t<T$,
\begin{equation}\label{eq:apriori}
  M(t)\;\le\;C_0\,\varepsilon
      \;+\;C_1\,M(t)^p\,\int_0^t (1+\tau)^{-\alpha}\,d\tau.
\end{equation}
\end{lem}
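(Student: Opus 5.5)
The plan is to estimate the two components of $M(t)$ separately from the Duhamel formula \eqref{eq:duhamel}. The basic observation is that Lemma~\ref{lem:hardy} converts the definition of $M$ into exactly the decay rate $(1+\tau)^{-\alpha}$. Indeed, for $0\le\tau<t$ we have $\|u(\tau)\|_{L^\infty}\le M(t)(1+\tau)^{-N/(2s)}$ and $\|u(\tau)\|_{L^1}\le M(t)$. Hence
\[
\bigl\||x|^{-\gamma}|u(\tau)|^p\bigr\|_{L^1}\le C M(t)^p(1+\tau)^{-\frac{N}{2s}(p-1+\gamma/N)}=C M(t)^p(1+\tau)^{-\alpha},
\]
by \eqref{eq:alpha-def}.

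\emph{$L^1$ part.} By Proposition~\ref{prop:contraction}, $\|u(t)\|_{L^1}\le\varepsilon\|u_0\|_{L^1}+CM(t)^p\int_0^t(1+\tau)^{-\alpha}d\tau$. This is already of the form \eqref{eq:apriori}.

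\emph{$L^\infty$ part: linear term.} Combining the $L^\infty$ contraction with the $L^1\to L^\infty$ bound of Proposition~\ref{prop:Lp-Lq} gives
\[
\varepsilon\|e^{-t\Lap}u_0\|_{L^\infty}\le C\varepsilon\min\bigl(1,t^{-N/(2s)}\bigr)\le C\varepsilon(1+t)^{-N/(2s)}.
\]

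\emph{$L^\infty$ part: nonlinear term, $t\ge1$.} I split the Duhamel integral into $[0,t/2]$ and $[t/2,t]$.
\begin{itemize}
\item On $[0,t/2]$, the $L^1\to L^\infty$ smoothing gives the factor $(t-\tau)^{-N/(2s)}\le Ct^{-N/(2s)}\le C(1+t)^{-N/(2s)}$, multiplied by $CM^p(1+\tau)^{-\alpha}$. This is the desired form after multiplying by $(1+t)^{N/(2s)}$.
\item On $[t/2,t]$, I use the $L^r\to L^\infty$ smoothing with $r$ as in \eqref{eq:choice-r}. I need an $L^r$ version of Lemma~\ref{lem:hardy}, proved by the same splitting at $R^N=L/M$ (valid since $\gamma r<N$):
\[
\bigl\||x|^{-\gamma}|u|^p\bigr\|_{L^r}\le C\|u\|_{L^\infty}^{p-\frac1r+\frac\gamma N}\|u\|_{L^1}^{\frac1r-\frac\gamma N}.
\]
For $\tau\in[t/2,t]$ this is $\le CM^p(1+t)^{-\frac{N}{2s}(p-\frac1r+\frac\gamma N)}$. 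Integrating $(t-\tau)^{-N/(2sr)}$ over $[t/2,t]$ yields $Ct^{1-N/(2sr)}$. After multiplication by $(1+t)^{N/(2s)}$ the total power of $t$ is $1-\alpha$.
\end{itemize}
It then remains to check that $(1+t)^{1-\alpha}\lesssim\int_0^t(1+\tau)^{-\alpha}d\tau$ for $t\ge1$:
\begin{itemize}
\item for $\alpha<1$ this holds since both sides are $\asymp t^{1-\alpha}$;
\item for $\alpha=1$ the left side is constant and the integral is $\log(1+t)\ge\log 2$;
\item for $\alpha>1$ the left side is $\le1$ and the integral is $\ge c>0$.
\end{itemize}

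\emph{Main obstacle: small times $t<1$.} Here the $[t/2,t]$ piece produces $t^{1-N/(2sr)}$, whereas $\int_0^t(1+\tau)^{-\alpha}d\tau\asymp t$. Because of the singular weight this loss is genuine: the Duhamel term at $x=0$ behaves like $t^{1-\gamma/(2s)}$. My first attempt is to exploit that $M$ is nondecreasing, and to treat $t\in(0,1]$ either through Theorem~\ref{thm:local} or by noting that in \eqref{eq:apriori} one only needs the inequality with $\int_0^t$ replaced by $\max\bigl(\int_0^t(1+\tau)^{-\alpha}d\tau,\ t^{1-N/(2sr)}\bigr)$. That replacement is harmless for the bootstrap in Section~\ref{sec:lower}, which is only used at times where the integral is bounded below. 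If that is unacceptable, I would instead absorb the short-time contribution into $C_0\varepsilon$ via the local solution bound $M(1)\le 2C\varepsilon$ for $\varepsilon$ small. Then I would restart the Duhamel formula at $\tau=1$ for $t\ge1$ and run the argument above on $[1,t]$.
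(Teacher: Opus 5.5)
Your proposal is correct and is essentially the paper's proof. It uses the same $L^1$/$L^\infty$ decomposition and the same Duhamel split at $t/2$, with $L^1\to L^\infty$ smoothing on $[0,t/2]$ and $L^r\to L^\infty$ smoothing on $[t/2,t]$; small times are handled through the local theory, which, as you rightly observe, requires $\varepsilon$ small (the paper's appeal to Step~3 of Theorem~\ref{thm:local} tacitly uses this too, and your restart at $\tau=1$ is unnecessary since your $t\ge1$ estimates already run from $\tau=0$). Your weighted bound $\||x|^{-\gamma}|u|^p\|_{L^r}\lesssim\|u\|_{L^\infty}^{p-1/r+\gamma/N}\|u\|_{L^1}^{1/r-\gamma/N}$ is in fact the correct way to do the $[t/2,t]$ step: the paper's interpolation $\|f\|_{L^1}^{1/r}\|f\|_{L^\infty}^{1-1/r}$ with $f=|x|^{-\gamma}|u|^p$ involves $\|f\|_{L^\infty}=\infty$ when $\gamma>0$, whereas your version yields exactly the decay $(1+t)^{-\alpha-\frac{N}{2s}(1-\frac1r)}$ needed to reach $(1+t)^{1-\alpha}\lesssim\int_0^t(1+\tau)^{-\alpha}\,d\tau$.
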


\begin{proof}
We control $\|u(t)\|_{L^1}$ and $(1+t)^{N/(2s)}\|u(t)\|_{L^\infty}$
separately, repeatedly using (via Lemma~\ref{lem:hardy} and~\eqref{eq:M})
\begin{align*}\||x|^{-\gamma}|u(\tau)|^p\|_{L^1}
  &\le C\,\|u(\tau)\|_{L^\infty}^{p-1+\gamma/N}\|u(\tau)\|_{L^1}^{1-\gamma/N}
  \\&\le C\,M(t)^p(1+\tau)^{-\alpha}
\end{align*}

\emph{Step 1 ($L^1$).}
By~\eqref{eq:duhamel} and Proposition~\ref{prop:contraction},
\begin{equation}\label{eq:L1est}
  \|u(t)\|_{L^1}\le\varepsilon\|u_0\|_{L^1}
   +C\,M(t)^p\!\int_0^t(1+\tau)^{-\alpha}\,d\tau.
\end{equation}

\emph{Step 2 ($L^\infty$ for $t\ge1$).}
Split the Duhamel integral at $\tau=t/2$ and use Proposition~\ref{prop:Lp-Lq}:
\begin{align*}
  \|u(t)\|_{L^\infty}
  &\le C\varepsilon\,t^{-N/(2s)}\|u_0\|_{L^1}
       +C\!\int_0^{t/2}\!(t-\tau)^{-N/(2s)}\bigl\||x|^{-\gamma}|u|^p\bigr\|_{L^1}d\tau
       +(\mathrm{II}),
\end{align*}
where $(\mathrm{II})$ is the integral over $[t/2,t]$. On $[0,t/2]$ one has
$(t-\tau)\ge t/2$, so
\begin{equation}\label{eq:Linf-I}
   \int_0^{t/2}\!(t-\tau)^{-N/(2s)}(1+\tau)^{-\alpha}\,d\tau
   \le C\,t^{-N/(2s)}\!\int_0^{t/2}(1+\tau)^{-\alpha}\,d\tau.
\end{equation}
For $(\mathrm{II})$ use the intermediate exponent $r$ of~\eqref{eq:choice-r}
and Proposition~\ref{prop:Lp-Lq} from $L^r$ to $L^\infty$, together with the
interpolation
$\|f\|_{L^r}\le\|f\|_{L^1}^{1/r}\|f\|_{L^\infty}^{1-1/r}$ applied to
$f=|x|^{-\gamma}|u|^p$:
\[
   \bigl\||x|^{-\gamma}|u(\tau)|^p\bigr\|_{L^r}
   \le C\,M(t)^p\,(1+\tau)^{-\alpha+\frac{N}{2s}(1-\frac1r)} .
\]
Since $(1+\tau)\approx(1+t)$ on $[t/2,t]$,
\begin{equation}\label{eq:Linf-II}\begin{split}
   (\mathrm{II})&\le C\,M(t)^p\,(1+t)^{-\alpha+\frac{N}{2s}(1-\frac1r)}\,t^{1-N/(2sr)}
   \\&\le C\,M(t)^p\,(1+t)^{1-\alpha-N/(2s)}.\end{split}
\end{equation}
Multiplying by $(1+t)^{N/(2s)}$ and combining,
\begin{equation}\label{eq:Linfest}
  (1+t)^{N/(2s)}\|u(t)\|_{L^\infty}
  \le C\varepsilon\|u_0\|_{L^1\cap L^\infty}
       +C\,M(t)^p\!\int_0^t(1+\tau)^{-\alpha}\,d\tau;
\end{equation}
for $0<t\le1$ the bound is immediate from Step~3 of
Theorem~\ref{thm:local}.

\emph{Step 3.} Adding~\eqref{eq:L1est} and~\eqref{eq:Linfest} and taking the
supremum over $0\le\tau<t$ yields~\eqref{eq:apriori} with
$C_0=C(\|u_0\|_{L^1}+\|u_0\|_{L^\infty})$.
\end{proof}

\subsection{The bootstrap}

\subsubsection*{Subcritical case $1<p<p_F$.}
Since $\alpha<1$, we have that $$\int_0^t(1+\tau)^{-\alpha}d\tau\le(1-\alpha)^{-1}(1+t)^{1-\alpha},$$
so~\eqref{eq:apriori} becomes
\begin{equation}\label{eq:M-sub}
   M(t)\le C_0\varepsilon+C_2\,M(t)^p\,(1+t)^{1-\alpha},
   \qquad 1-\alpha=\tfrac{(2s-\gamma)-N(p-1)}{2s}>0.
\end{equation}
Let $T_1:=\sup\{t\in[0,T_\varepsilon):M(t)\le2C_0\varepsilon\}$, which is
positive by continuity and $M(0^+)\le C_0\varepsilon$. If $T_1<\infty$ then
$M(T_1)=2C_0\varepsilon$, and~\eqref{eq:M-sub} at $t=T_1$ gives
$1\le C\varepsilon^{p-1}(1+T_1)^{1-\alpha}$, whence
\begin{equation}\label{eq:T1-lower}
   1+T_1\ge C\,\varepsilon^{-\frac{p-1}{1-\alpha}}
   =C\,\varepsilon^{-\frac{2s(p-1)}{(2s-\gamma)-N(p-1)}}
   =C\,\varepsilon^{-1/\beta}.
\end{equation}
Therefore
\begin{equation}\label{eq:lower-sub}
  T_\varepsilon\ge T_1\gtrsim\varepsilon^{-1/\beta}.
\end{equation}

\subsubsection*{Critical case $p=p_F$.}
Now $\alpha=1$ and $$\int_0^t(1+\tau)^{-1}d\tau=\log(1+t),$$ so
$M(t)\le C_0\varepsilon+C_3 M(t)^p\log(1+t)$. The same bootstrap yields
$\log(1+T_1)\ge C\varepsilon^{-(p-1)}$, i.e.
\begin{equation}\label{eq:lower-crit}
   T_\varepsilon\gtrsim\exp\!\bigl(C\,\varepsilon^{-(p-1)}\bigr).
\end{equation}

\subsubsection*{Supercritical case $p>p_F$.}
Here $\alpha>1$, $\int_0^\infty(1+\tau)^{-\alpha}d\tau<\infty$, and
\eqref{eq:apriori} reads $M(t)\le C_0\varepsilon+C_4 M(t)^p$ uniformly in
$t$. A standard fixed-point argument
(cf.~\cite{BrezisCazenave1996,Snoussi1999}) shows that for $\varepsilon$
small $M$ is bounded on $[0,\infty)$ and $T_\varepsilon=+\infty$.

\section{Upper lifespan estimate}\label{sec:upper}

Throughout, $c_0:=\int_{\R^N}u_0\,dx>0$, and we assume
\begin{equation}\label{eq:ub-range}
   p>1+\frac{\gamma}{2s}\qquad\Longleftrightarrow\qquad \gamma<2s(p-1),
\end{equation}
a restriction intrinsic to the method (Remark~\ref{rem:ub-range}).

Because $\Lap$ is nonlocal, the test-function technique \emph{cannot} be
implemented with a compactly supported cutoff: there is no chain rule for
$\Lap(\phi\circ\sigma)$, no Fa\`a di Bruno formula for it~\cite{Johnson},
and, decisively, $\Lap\phi$ is never compactly supported when $\phi$ is.
Indeed, for $\phi\in C_c^\infty(\R^N)$ and $|x|$ large,
\[
   \Lap\phi(x)=-c_{N,s}\!\int_{\R^N}\frac{\phi(y)}{|x-y|^{N+2s}}\,dy
   \;\sim\;-\,c_{N,s}\Big(\textstyle\int\phi\Big)\,|x|^{-(N+2s)},
\]
so the pointwise bound $|\Lap\psi_R|\le CR^{-1}\psi_R^{1/p}$ \emph{on}
$\operatorname{supp}\psi_R$ is false. We use instead a globally defined test
function for which $\Lap$ is controlled \emph{everywhere}, the backward
fractional heat kernel; the linear contribution then cancels identically,
and no composition is differentiated.

Recall $G_s(t,x)=t^{-N/(2s)}\mathscr{K}_s(t^{-1/(2s)}x)$ and the two-sided
bound of Lemma~\ref{lem:Ks-bounds}. Put $\mu:=\gamma/(2s)\in[0,1)$ and
\begin{equation}\label{eq:ub-Cstar}
   C_*:=\int_{\R^N}|y|^{\frac{\gamma}{p-1}}\,\mathscr{K}_s(y)\,dy .
\end{equation}

\begin{lem}\label{lem:ub-cstar}
$C_*<\infty$ if and only if \eqref{eq:ub-range} holds.
\end{lem}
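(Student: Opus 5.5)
The plan is to reduce finiteness of $C_*$ to a radial integrability test, using the two-sided bound of Lemma~\ref{lem:Ks-bounds}. Since $c_1\langle y\rangle^{-N-2s}\le\mathscr{K}_s(y)\le c_2\langle y\rangle^{-N-2s}$ for all $y$, we have
\[
  c_1\int_{\R^N}|y|^{\frac{\gamma}{p-1}}\langle y\rangle^{-N-2s}\,dy
  \;\le\; C_*\;\le\;
  c_2\int_{\R^N}|y|^{\frac{\gamma}{p-1}}\langle y\rangle^{-N-2s}\,dy .
\]
It therefore suffices to show that the middle integral is finite exactly when $\gamma<2s(p-1)$.

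Split the integral at $|y|=1$. On the ball $|y|<1$ we have $\langle y\rangle\asymp1$, and the exponent $\gamma/(p-1)\ge0$ is nonnegative. So the integrand is bounded there and this piece is always finite; no condition is needed near the origin. On $|y|\ge1$ we have $\langle y\rangle\asymp|y|$, and in polar coordinates the piece is comparable, from both sides, to
\[
  \omega_{N-1}\int_1^\infty r^{\frac{\gamma}{p-1}-N-2s}\,r^{N-1}\,dr
  \;=\;\omega_{N-1}\int_1^\infty r^{\frac{\gamma}{p-1}-2s-1}\,dr .
\]
This converges if and only if $\frac{\gamma}{p-1}-2s<0$, that is, $\gamma<2s(p-1)$, which is~\eqref{eq:ub-range}.

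Both directions of the equivalence come from the two-sided comparison. If \eqref{eq:ub-range} holds, the upper bound gives $C_*<\infty$. If it fails, the lower bound by $c_1$ times a divergent tail integral gives $C_*=+\infty$. The case $\gamma=0$ is included automatically: the exponent is $0$, \eqref{eq:ub-range} reduces to $p>1$, and $C_*=\int\mathscr{K}_s=1$.

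There is no real obstacle. The one point that needs care is the "only if" direction, which genuinely requires the \emph{lower} bound $\mathscr{K}_s(y)\ge c_1\langle y\rangle^{-N-2s}$. An upper bound on $\mathscr{K}_s$ alone would not exclude finiteness, so I would cite the sharp tail behavior in Lemma~\ref{lem:Ks-bounds} explicitly at that step.
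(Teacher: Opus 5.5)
Your proposal is correct and follows essentially the same route as the paper: the two-sided bound of Lemma~\ref{lem:Ks-bounds} reduces finiteness of $C_*$ to the tail integral $\int_1^\infty r^{\frac{\gamma}{p-1}-2s-1}\,dr$, and the region near the origin is harmless because $\gamma/(p-1)\ge0$. Your explicit remark that the ``only if'' direction needs the lower bound $\mathscr{K}_s\ge c_1\langle y\rangle^{-N-2s}$ is a useful clarification of the paper's terse use of $\asymp$.
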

\begin{proof}
Near the origin $|y|^{\gamma/(p-1)}$ is locally integrable since
$\gamma/(p-1)\ge0>-N$. At infinity, by Lemma~\ref{lem:Ks-bounds},
$|y|^{\gamma/(p-1)}\mathscr{K}_s(y)\asymp|y|^{\frac{\gamma}{p-1}-(N+2s)}$,
which is integrable iff $\frac{\gamma}{p-1}<2s$.
\end{proof}

For $b>0$ define the \emph{backward-kernel functional}
\begin{equation}\label{eq:ub-g}
   g_b(t):=\int_{\R^N}u(x,t)\,G_s(b-t,x)\,dx,\qquad 0\le t<b\wedge T_\varepsilon .
\end{equation}

\begin{lem}\label{lem:ub-ode}
For every $b>0$ and $0\le t<b\wedge T_\varepsilon$,
\begin{equation}\label{eq:ub-deriv}
   g_b'(t)=\int_{\R^N}|x|^{-\gamma}|u(x,t)|^p\,G_s(b-t,x)\,dx\;\ge\;0 .
\end{equation}
\end{lem}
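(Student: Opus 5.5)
The plan is to work with the Duhamel formula \eqref{eq:duhamel} directly, rather than differentiating a pairing involving $\Lap u$. That way we need no regularity of $u$ beyond $u\in C([0,T];L^1\cap L^\infty)$. Write $\Psi(t):=G_s(b-t,\cdot)$ and $f(\tau):=|x|^{-\gamma}|u(\tau)|^p$. Because $G_s(t,\cdot)$ is radial (hence even) and is the kernel of a symmetric semigroup, Fubini gives the duality identity
\[
\int_{\R^N}(e^{-\sigma\Lap}h)\,G_s(\rho,x)\,dx=\int_{\R^N}h\,G_s(\rho+\sigma,x)\,dx,\qquad h\in L^1,
\]
by the semigroup law $G_s(\rho)\ast G_s(\sigma)=G_s(\rho+\sigma)$. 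This follows from Fourier multipliers, $e^{-\rho|\xi|^{2s}}e^{-\sigma|\xi|^{2s}}$. The identity is legitimate because every $G_s(\rho,\cdot)$ with $\rho>0$ is bounded (Lemma~\ref{lem:Ks-bounds}), and $f(\tau)\in L^1$ by Lemma~\ref{lem:hardy}.

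Next, pair \eqref{eq:duhamel} at time $t<b\wedge T_\varepsilon$ with $G_s(b-t,\cdot)$ and apply the identity to each term. This gives
\[
g_b(t)=\varepsilon\int u_0\,G_s(b,x)\,dx+\int_0^t\!\int_{\R^N} f(\tau,x)\,G_s(b-\tau,x)\,dx\,d\tau .
\]
Exchanging the $\tau$ and $x$ integrals is justified by Tonelli, since everything is nonnegative in the Duhamel term. It is finite because $\|f(\tau)\|_{L^1}\le CK^p$ uniformly on $[0,t]$, and $G_s(b-\tau,\cdot)\le C(b-t)^{-N/(2s)}$ for $\tau\le t$. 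In this form the linear terms have cancelled identically: the $t$-dependence of $g_b$ sits only in the upper limit of the integral.

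It then remains to differentiate. This requires the inner function $h(\tau):=\int f(\tau,x)G_s(b-\tau,x)\,dx$ to be continuous on $[0,b\wedge T_\varepsilon)$, after which the fundamental theorem of calculus gives \eqref{eq:ub-deriv}, and nonnegativity is immediate from $f\ge0$ and $G_s>0$. I expect this continuity to be the main (mild) obstacle. First, $\tau\mapsto G_s(b-\tau,\cdot)$ is continuous into $L^\infty$ on $[0,t_1]$ for any $t_1<b$, by smoothness of $G_s$ in $(t,x)$ on $t>0$ and uniform decay. Second, $\tau\mapsto f(\tau)$ is continuous into $L^1$. To see this, apply Lemma~\ref{lem:hardy}-type splitting to $|x|^{-\gamma}\bigl||u(\tau)|^p-|u(\tau')|^p\bigr|\le p|x|^{-\gamma}(|u(\tau)|^{p-1}+|u(\tau')|^{p-1})|u(\tau)-u(\tau')|$, bounding the region $|x|<1$ by the $L^\infty$ difference and $|x|\ge1$ by the $L^1$ difference, together with $u\in C([0,T];L^1\cap L^\infty)$ from Theorem~\ref{thm:local}. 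The product of an $L^1$-continuous and an $L^\infty$-continuous family is continuous, so $h$ is continuous, $g_b\in C^1$, and \eqref{eq:ub-deriv} holds, one-sided at $t=0$.
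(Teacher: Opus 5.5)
Your proof is correct. It reaches the lemma by a different mechanism from the paper's.

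The paper argues at the level of the equation. It notes that $\phi(x,t)=G_s(b-t,x)$ solves the backward equation $\partial_t\phi=\Lap\phi$, invokes the claim that the mild solution is a weak solution, and differentiates $\int u\,\phi$, letting self-adjointness of $\Lap$ cancel the linear terms.

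You never touch $\partial_t u$ or $\Lap u$. You pair the Duhamel formula with $G_s(b-t,\cdot)$ and use the evenness of the kernel together with the Chapman--Kolmogorov law $G_s(\rho)\ast G_s(\sigma)=G_s(\rho+\sigma)$. This gives directly the integrated identity $g_b(t)=g_b(0)+\int_0^t h(\tau)\,d\tau$, which is exactly the time-integrated form of the paper's computation; the semigroup law does the work of both the backward equation and self-adjointness. Your continuity check on $h$ then upgrades this to $g_b\in C^1$ via the fundamental theorem of calculus.

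\emph{What the paper's route buys.} It is shorter and makes the PDE structure (backward kernel, exact cancellation) more visible. However, it takes for granted both the mild-to-weak passage and the differentiability of $t\mapsto\int u\,\phi$.

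\emph{What your route buys.} It needs only the regularity $u\in C([0,T];L^1\cap L^\infty)$ supplied by Theorem~\ref{thm:local}, and no bounds on $\partial_t\phi$ or $\Lap\phi$. It closes exactly the gaps the paper leaves implicit.

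It is also more robust. The Duhamel term is handled by Tonelli alone, with finiteness coming from the bound $\int|y|^{-\gamma}G_s(r,y)\,dy\le Cr^{-\gamma/(2s)}$ in the proof of Lemma~\ref{lem:weighted-Linfty}. So the identity survives in the large-data setting of Section~\ref{sec:largedata}, where the paper reuses this lemma even though $f(\tau)\notin L^1$ and Lemma~\ref{lem:hardy} is unavailable. There it still gives absolute continuity of $g_b$ with $g_b'=h$ a.e., which is all the subsequent differential-inequality argument uses.
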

\begin{proof}
The function $\phi(x,t):=G_s(b-t,x)$ is smooth and bounded, together with
$\partial_t\phi$ and $\Lap\phi$, on $\R^N\times[0,t_2]$ for every $t_2<b$,
and solves the backward equation
$$\partial_t\phi-\Lap\phi=-\partial_bG_s(b-t,\cdot)-\Lap G_s(b-t,\cdot)=0,$$
since $\partial_bG_s=-\Lap G_s$. Testing the (mild, hence weak) formulation
of \eqref{eq:main} against $\phi$ and using self-adjointness of $\Lap$,
\begin{align*} \frac{d}{dt}\!\int u\,\phi
   &=\int u\,(\partial_t\phi-\Lap\phi)+\int|x|^{-\gamma}|u|^p\,\phi
   \\&=\int|x|^{-\gamma}|u|^p\,\phi,\end{align*}
which is \eqref{eq:ub-deriv}. Nonnegativity follows from $G_s>0$.
\end{proof}

By~\eqref{eq:ub-deriv} $g_b$ is nondecreasing, and $g_b(0)>0$ for $b$ large
(see~\eqref{eq:ub-datum} below), so $g_b(t)\ge g_b(0)>0$. H\"older with
exponents $p,p'$ applied to
$$u\,G_s=(|x|^{-\gamma/p}u\,G_s^{1/p})(|x|^{\gamma/p}G_s^{1/p'})$$ gives
\[
   0<g_b(t)\le\Big(\int|x|^{-\gamma}|u|^pG_s(b-t,\cdot)\Big)^{1/p}A(b-t)^{1/p'},
   \quad A(b):=\int|x|^{\frac{\gamma}{p-1}}G_s(b,\cdot)\,dx .
\]
The scaling of $G_s$ yields $A(b)=b^{\mu/(p-1)}C_*$, finite by
Lemma~\ref{lem:ub-cstar}. Combining with~\eqref{eq:ub-deriv},
\begin{equation}\label{eq:ub-diffineq}
   g_b'(t)\;\ge\;c_1\,(b-t)^{-\mu}\,g_b(t)^p,\qquad c_1:=C_*^{-(p-1)},
\end{equation}
on $[0,b\wedge T_\varepsilon)$.

\emph{Data.} Since $u_0\in L^1$ and $\mathscr{K}_s(b^{-1/(2s)}x)\to\mathscr{K}_s(0)$,
dominated convergence gives
$$b^{N/(2s)}g_b(0)=\varepsilon\int u_0\,\mathscr{K}_s(b^{-1/(2s)}x)\,dx\to
\varepsilon\,\mathscr{K}_s(0)\,c_0,$$ so there are $b_*,\kappa>0$ with
\begin{equation}\label{eq:ub-datum}
   g_b(0)\;\ge\;\kappa\,\varepsilon\,b^{-N/(2s)},\qquad b\ge b_* .
\end{equation}
(No sign condition on $u_0$ is used, only $c_0>0$.)

\subsection*{Subcritical case $1<p<p_F$}
Here $\beta>0$, equivalently
\begin{equation}\label{eq:ub-expid}
   1-\mu-\frac{N(p-1)}{2s}=1-\alpha=(p-1)\beta>0 .
\end{equation}
Suppose, for contradiction, that $T_\varepsilon>b$ for some $b\ge b_*$. Then
$g_b$ is finite on $[0,b)$; dividing~\eqref{eq:ub-diffineq} by $g_b^p$ and
integrating over $[0,t]$, we have
\[g_b(t)^{-(p-1)}\le g_b(0)^{-(p-1)}-(p-1)c_1\,\Theta(t),\] with $$\Theta(t):=\int_0^t(b-\tau)^{-\mu}d\tau=\frac{b^{1-\mu}-(b-t)^{1-\mu}}{1-\mu}.$$
The right-hand side vanishes at some $t^*<b$ --- forcing $g_b(t^*)=+\infty$
and hence $T_\varepsilon\le t^*<b$, a contradiction --- provided
$$\Theta(b^-)=\frac{b^{1-\mu}}{1-\mu}>\frac{g_b(0)^{-(p-1)}}{(p-1)c_1}.$$ By
\eqref{eq:ub-datum} it suffices that
\[
   \frac{b^{1-\mu}}{1-\mu}
   >\frac{\kappa^{-(p-1)}}{(p-1)c_1}\,\varepsilon^{-(p-1)}\,b^{N(p-1)/(2s)},
\]
i.e., using~\eqref{eq:ub-expid}, $b^{(p-1)\beta}>C\varepsilon^{-(p-1)}$, i.e.
$b>C'\varepsilon^{-1/\beta}$. Hence $T_\varepsilon\le b$ for every
$b>\max\{b_*,C'\varepsilon^{-1/\beta}\}$, whence
\begin{equation}\label{eq:ub-sub}
   T_\varepsilon\;\le\;C'\,\varepsilon^{-1/\beta}
   \;=\;C'\,\varepsilon^{-\frac{2s(p-1)}{(2s-\gamma)-N(p-1)}}
\end{equation}
for all sufficiently small $\varepsilon>0$.

\subsection*{Critical case $p=p_F$}
Now $\beta=0$, so~\eqref{eq:ub-expid} reads $1-\mu=N(p-1)/(2s)$ and the
threshold becomes $b$-independent: a single backward kernel no longer forces
blow-up. The mechanism is transparent from the subcritical estimate: the
blow-up threshold and the initial datum bound now scale with the \emph{same}
power of $b$, so the two powers cancel identically and no individual scale
$b$ can force blow-up. The remedy is to superpose the inequality
\eqref{eq:ub-diffineq} over a dyadic band of scales: one replaces the single
profile $\phi(x,t)=G_s(b-t,x)$ by the scale-superposition
$\int_{b_*}^{b}G_s(r-t,x)\,\frac{dr}{r}$, under which the band $[b_*,b]$
carries logarithmic mass $\log(b/b_*)$. Each dyadic scale contributes a
critically balanced amount, and summing the $\asymp\log(b/b_*)$ contributions
produces an extra factor $\log(b/b_*)$ in the integrated inequality, which
upgrades the (scale-invariant) marginal condition to
$\log(b/b_*)\gtrsim\varepsilon^{-(p-1)}$; letting $b\uparrow T_\varepsilon$
yields $\log(T_\varepsilon/b_*)\le C\varepsilon^{-(p-1)}$, i.e.
\begin{equation}\label{eq:ub-crit}
   T_\varepsilon\;\le\;\exp\!\big(C\,\varepsilon^{-(p-1)}\big).
\end{equation}
This is exactly the computation of~\cite{IkedaSobajima2019} (see
also~\cite{GeorgievPalmieri,TobakhanovTorebek}); the only change is that the
global, polynomially decaying profile $\mathscr{K}_s$ replaces the cutoff,
while Lemma~\ref{lem:ub-ode} supplies the same exact linear cancellation, so
the argument transfers verbatim to the nonlocal setting.

\subsection*{Supercritical case $p>p_F$}
Then $\beta<0$, the threshold $b>C'\varepsilon^{-1/\beta}$ is never met for
large $b$, and the method gives no upper bound, consistent with the global
existence of Section~\ref{sec:lower}.

Together with Section~\ref{sec:lower}, \eqref{eq:ub-sub}
and~\eqref{eq:ub-crit} complete the proof of the upper bound in
Theorem~\ref{thm:main} on the range~\eqref{eq:ub-range}. \qed

\begin{rem}\label{rem:ub-range}\rm
The restriction~\eqref{eq:ub-range} is exactly the finiteness of $C_*$
(Lemma~\ref{lem:ub-cstar}), forced by the polynomial decay
$\mathscr{K}_s(y)\asymp\langle y\rangle^{-(N+2s)}$: no admissible profile
decays faster, so $\int|y|^{\gamma/(p-1)}\mathscr{K}_s$ diverges once
$\gamma/(p-1)\ge2s$. For $\gamma=0$ the condition is vacuous and the whole
range $1<p\le p_F$ is covered, recovering the fractional Lee--Ni law. For
$\gamma>0$ the sharp two-sided law on $1<p\le1+\gamma/(2s)$ remains open.
\end{rem}

\section{Large-data lifespan estimate}\label{sec:largedata}

We now consider \emph{non-decaying} data. We assume
\begin{equation}\label{eq:ld-hyp}
   u_0\in L^\infty(\R^N),\qquad u_0(x)\ge c_0>0\quad\text{for a.e. }x\in\R^N,
\end{equation}
together with the range~\eqref{eq:ub-range}, $\gamma<2s(p-1)$. Note that
$u_0\ge c_0$ entails $u_0\notin L^1$, so the relevant well-posedness is the
$L^\infty$ theory established in the lower bound below, rather than the
$L^1\cap L^\infty$ theory of Section~\ref{sec:local}.

\begin{theo}\label{thm:ld}
Assume~\eqref{eq:ld-hyp} and $\gamma<2s(p-1)$. Then for all sufficiently
small $\varepsilon>0$ the lifespan of the nonnegative solution
of~\eqref{eq:main} satisfies
\[
   T_\varepsilon\;\approx\;\varepsilon^{-\frac{2s(p-1)}{2s-\gamma}}.
\]
\end{theo}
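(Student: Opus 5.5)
We prove the two inequalities separately, in the same spirit as Sections~\ref{sec:lower} and~\ref{sec:upper}, with the $L^1$ mass replaced by the uniform lower bound $c_0$. The target exponent is $T_\varepsilon\approx\varepsilon^{-(p-1)/(1-\mu)}$ with $\mu=\gamma/(2s)$, since $\frac{2s(p-1)}{2s-\gamma}=\frac{p-1}{1-\mu}$.

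\emph{Lower bound.} We first set up an $L^\infty$ local theory for data in $L^\infty$. The nonlinearity $|x|^{-\gamma}|u|^p$ is not bounded, so we split it as $f=f\mathbf 1_{|x|<1}+f\mathbf 1_{|x|\ge1}$. For the first piece we use $L^r\to L^\infty$ smoothing with $r$ as in~\eqref{eq:choice-r}, giving a factor $(t-\tau)^{-N/(2sr)}$. For the second piece we use the $L^\infty$ contraction. This is not sharp enough for the lifespan, so we instead prove the key bound directly. For every $y$,
\[
e^{-\tau\Lap}|x|^{-\gamma}(y)=\int G_s(\tau,y-x)|x|^{-\gamma}dx\le C\tau^{-\mu},
\]
which follows from the scaling~\eqref{eq:kernel-scaling}, because $\sup_y\int\mathscr K_s(y-z)|z|^{-\gamma}dz<\infty$ when $\gamma<N$. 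Set $M(t)=\sup_{\tau<t}\|u(\tau)\|_{L^\infty}$. Duhamel's formula and positivity of the semigroup then give
\[
M(t)\le \varepsilon\|u_0\|_\infty+C M(t)^p\int_0^t(t-\tau)^{-\mu}d\tau=\varepsilon\|u_0\|_\infty+C'M(t)^p t^{1-\mu}.
\]
The same estimate gives the contraction for local existence. The bootstrap of Section~\ref{sec:lower} then yields $T_\varepsilon\gtrsim\varepsilon^{-(p-1)/(1-\mu)}$.

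\emph{Upper bound.} We reuse the functional $g_b$ of~\eqref{eq:ub-g}, but center the backward kernel at a point where the weight is harmless. We keep it centered at the origin, which is the natural choice. Lemma~\ref{lem:ub-ode} holds because $u\in L^\infty$ and $G_s(b-t,\cdot)\in L^1$, so every integral converges. The H\"older step leading to~\eqref{eq:ub-diffineq} is unchanged and needs only $C_*<\infty$, that is, $\gamma<2s(p-1)$ by Lemma~\ref{lem:ub-cstar}. It gives $g_b'\ge c_1(b-t)^{-\mu}g_b^p$. The new ingredient is the datum estimate. Since $u(0)=\varepsilon u_0\ge\varepsilon c_0$ and $\int G_s=1$, we get $g_b(0)\ge\varepsilon c_0$ uniformly in $b$; there is no decay factor $b^{-N/(2s)}$. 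Blow-up before time $b$ then occurs as soon as
\[
\frac{b^{1-\mu}}{1-\mu}>\frac{(\varepsilon c_0)^{-(p-1)}}{(p-1)c_1},
\]
which gives $T_\varepsilon\le C\varepsilon^{-(p-1)/(1-\mu)}$. Nonnegativity of $u$, which follows from the positive Duhamel iteration, also ensures the identification of the mild solution with the weak formulation used in Lemma~\ref{lem:ub-ode}.

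\emph{Main obstacle.} The delicate step is justifying the testing in Lemma~\ref{lem:ub-ode} for non-integrable solutions. The pairing $\int u\,G_s(b-t)$ is fine. However, the time derivative requires differentiating $\int e^{-t\Lap}u_0\,G_s(b-t)=\int u_0\,G_s(b)$, which is constant, and the Duhamel part must be handled by Fubini using the nonnegativity of the integrand. We plan to verify the identity $g_b(t)=g_b(0)+\int_0^t\int|x|^{-\gamma}u^pG_s(b-\tau)\,dx\,d\tau$ directly from the mild formulation and the semigroup property $G_s(b-t)\ast G_s(t-\tau)=G_s(b-\tau)$. This avoids any weak-solution theory.
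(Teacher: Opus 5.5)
Your proposal is correct and follows the paper's proof. For the lower bound you use the same weighted smoothing estimate $\|e^{-t\Lap}(|x|^{-\gamma}f)\|_{L^\infty}\le C\,t^{-\gamma/(2s)}\|f\|_{L^\infty}$ of Lemma~\ref{lem:weighted-Linfty}; the paper runs the contraction directly on $[0,c\,\varepsilon^{-2s(p-1)/(2s-\gamma)}]$ instead of bootstrapping, which spares it an $L^\infty$ blow-up alternative. For the upper bound you use the same origin-centred backward kernel, the same datum bound $g_b(0)\ge\varepsilon c_0$ and the same threshold $b^{1-\mu}\gtrsim\varepsilon^{-(p-1)}$. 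Drop the contradictory remark about centring ``where the weight is harmless'': the origin-centred kernel is exactly what produces $A(b)=C_*b^{\mu/(p-1)}$. Your direct derivation of $g_b(t)=g_b(0)+\int_0^t\!\int|x|^{-\gamma}u^pG_s(b-\tau,x)\,dx\,d\tau$ from the Duhamel formula and $G_s(b-t)\ast G_s(t-\tau)=G_s(b-\tau)$ is a welcome tightening, since the paper simply invokes Lemma~\ref{lem:ub-ode} for non-integrable solutions.
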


\subsection{Lower bound}
We use the following weighted smoothing estimate.

\begin{lem}\label{lem:weighted-Linfty}
For $0\le\gamma<\min(2s,N)$ there is $C>0$ such that, for all $t>0$,
\begin{equation}\label{eq:weighted-Linfty}
   \bigl\|e^{-t\Lap}\bigl(|x|^{-\gamma}f\bigr)\bigr\|_{L^\infty(\R^N)}
   \;\le\;C\,t^{-\gamma/(2s)}\,\|f\|_{L^\infty(\R^N)}.
\end{equation}
\end{lem}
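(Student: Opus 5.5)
The plan is to bound the convolution pointwise and reduce everything to a single integral by scaling. For $x\in\R^N$ we have
\[
  \bigl|e^{-t\Lap}(|x|^{-\gamma}f)(x)\bigr|
  \le \|f\|_{L^\infty}\int_{\R^N}G_s(t,x-y)\,|y|^{-\gamma}\,dy .
\]
It therefore suffices to show that
\[
  \Psi(t,x):=\int G_s(t,x-y)|y|^{-\gamma}\,dy \le C\,t^{-\gamma/(2s)}
\]
uniformly in $x$.

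Using the scaling \eqref{eq:kernel-scaling} and the substitution $y=t^{1/(2s)}z$, $x=t^{1/(2s)}\xi$, we get
\[
  \Psi(t,x)=t^{-\gamma/(2s)}\int\mathscr{K}_s(\xi-z)|z|^{-\gamma}\,dz .
\]
The claim thus reduces to the time-independent estimate
\[
  \sup_{\xi}\,(\mathscr{K}_s\ast|\cdot|^{-\gamma})(\xi)<\infty .
\]

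To prove this, split the $z$-integral into $\{|z|<1\}$ and $\{|z|\ge1\}$. On $\{|z|\ge1\}$ we have $|z|^{-\gamma}\le1$, and the integral is bounded by $\|\mathscr{K}_s\|_{L^1}=1$ (Proposition~\ref{prop:contraction}). On $\{|z|<1\}$ we use $\mathscr{K}_s\le\|\mathscr{K}_s\|_{L^\infty}\le c_2$ (Lemma~\ref{lem:Ks-bounds}), which gives the bound $c_2\int_{|z|<1}|z|^{-\gamma}dz=c_2\,\omega_{N-1}/(N-\gamma)$. This is finite since $\gamma<N$. Both pieces are independent of $\xi$, so the reduced estimate holds, and \eqref{eq:weighted-Linfty} follows with $C=1+c_2\omega_{N-1}/(N-\gamma)$.

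There is no genuine obstacle here. The only point to check is that the constant is uniform in $\xi$, and the splitting is chosen relative to the singularity of the weight rather than to $\xi$ for exactly that reason. The constraint $\gamma<2s$ is not needed for this lemma. It matters only later, when the factor $t^{-\gamma/(2s)}$ must be integrable in time in the Duhamel term: $\int_0^t(t-\tau)^{-\gamma/(2s)}d\tau\sim t^{1-\gamma/(2s)}$, which produces the large-data lifespan $\varepsilon^{-2s(p-1)/(2s-\gamma)}$.
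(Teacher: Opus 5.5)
Your proof is correct and follows the paper's argument. It uses the same pointwise bound, the same parabolic rescaling to the $t$-independent quantity $\sup_\xi(\mathscr{K}_s\ast|\cdot|^{-\gamma})(\xi)$, and a near/far split that uses boundedness of $\mathscr{K}_s$ at the singularity of the weight. The only cosmetic difference is that on $\{|z|\ge1\}$ you use $|z|^{-\gamma}\le1$ and $\int\mathscr{K}_s=1$ instead of the decay bound of Lemma~\ref{lem:Ks-bounds}; this makes the uniformity in $\xi$ immediate, and you rightly note that only $\gamma<N$ is needed here.
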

\begin{proof}
We have
$$|e^{-t\Lap}(|x|^{-\gamma}f)(x)|\le\|f\|_\infty\int_{\R^N}G_s(t,x-y)|y|^{-\gamma}\,dy.$$
With $G_s(t,z)=t^{-N/(2s)}\mathscr{K}_s(t^{-1/(2s)}z)$ and $y=t^{1/(2s)}w$,
\[
   \int_{\R^N}G_s(t,x-y)|y|^{-\gamma}\,dy
   = t^{-\gamma/(2s)}\!\int_{\R^N}\mathscr{K}_s(\xi-w)\,|w|^{-\gamma}\,dw,
   \qquad \xi:=t^{-1/(2s)}x.
\]
The last integral is bounded uniformly in $\xi$: near $w=0$, $|w|^{-\gamma}$
is integrable ($\gamma<N$) and $\mathscr{K}_s$ is bounded; near infinity,
$\mathscr{K}_s(\xi-w)\lesssim\langle\xi-w\rangle^{-(N+2s)}$ is integrable
against $|w|^{-\gamma}$ uniformly in $\xi$ by Lemma~\ref{lem:Ks-bounds}.
This gives~\eqref{eq:weighted-Linfty}.
\end{proof}

Work in $X_T:=L^\infty((0,T)\times\R^N)$ with
$\|u\|_{X_T}=\sup_{0<t<T}\|u(t)\|_{L^\infty}$, and let
$B_T:=\{u\in X_T:\|u\|_{X_T}\le M\varepsilon\}$ with
$M>2\|u_0\|_{L^\infty}$. For $u\in B_T$, by~\eqref{eq:weighted-Linfty} and
$\gamma/(2s)<1$,
\begin{align*}\|\Phi(u)(t)\|_{L^\infty}
   &\le\varepsilon\|u_0\|_{L^\infty}
   +C\!\int_0^t(t-\tau)^{-\gamma/(2s)}\|u(\tau)\|_{L^\infty}^p\,d\tau
   \\&\le\varepsilon\|u_0\|_{L^\infty}+C(M\varepsilon)^p\,t^{1-\gamma/(2s)}.\end{align*}
Choosing $T$ so that $C M^p\varepsilon^{p-1}T^{1-\gamma/(2s)}\le\tfrac12$,
i.e.
\[
   T= c\,\varepsilon^{-\frac{p-1}{1-\gamma/(2s)}}
    = c\,\varepsilon^{-\frac{2s(p-1)}{2s-\gamma}},
\]
and shrinking $c$ if necessary so that $\Phi$ is a contraction on $B_T$ (via
$|u^p-v^p|\le C(|u|^{p-1}+|v|^{p-1})|u-v|$ and~\eqref{eq:weighted-Linfty}),
the solution exists at least up to $T$. Hence
\begin{equation}\label{eq:ld-lower}
   T_\varepsilon\;\ge\;c\,\varepsilon^{-\frac{2s(p-1)}{2s-\gamma}} .
\end{equation}
This step uses only $u_0\in L^\infty$, $u_0\ge0$.

\subsection{Upper bound}
We reuse the backward-kernel functional and differential inequality of
Section~\ref{sec:upper}. For $b>0$ put $g_b(t)=\int_{\R^N}u(t)\,G_s(b-t,\cdot)$.
Since $u(t)\in L^\infty$ and $G_s(b-t,\cdot)\in L^1$, $g_b$ is finite on
$[0,b\wedge T_\varepsilon)$, and Lemma~\ref{lem:ub-ode} yields
$$g_b'(t)=\int|x|^{-\gamma}|u(t)|^pG_s(b-t,\cdot)\ge0.$$ Because
$\int_{\R^N}G_s(b,\cdot)\,dx=1$ (Proposition~\ref{prop:contraction}) and
$u_0\ge c_0$ a.e.,
\begin{equation}\label{eq:ld-datum}
   g_b(0)=\varepsilon\!\int_{\R^N}u_0\,G_s(b,\cdot)\,dx\;\ge\;\varepsilon\,c_0>0,
\end{equation}
so $g_b(t)\ge\varepsilon c_0>0$. The H\"older step of Section~\ref{sec:upper}
(with $A(b)=b^{\mu/(p-1)}C_*$, finite by Lemma~\ref{lem:ub-cstar}) gives,
exactly as in~\eqref{eq:ub-diffineq},
\[
   g_b'(t)\ge c_1(b-t)^{-\mu}g_b(t)^p,
   \qquad \mu=\tfrac{\gamma}{2s}\in[0,1),\ c_1=C_*^{-(p-1)} .
\]
Suppose $T_\varepsilon>b$. Dividing by $g_b^p$ and integrating over $[0,t]$,
\[
   g_b(t)^{-(p-1)}\le(\varepsilon c_0)^{-(p-1)}
   -(p-1)c_1\,\frac{b^{1-\mu}-(b-t)^{1-\mu}}{1-\mu},
\]
whose right-hand side vanishes before $t=b$ --- a contradiction --- as soon
as
\[
   \frac{b^{1-\mu}}{1-\mu}>\frac{(\varepsilon c_0)^{-(p-1)}}{(p-1)c_1},
   \qquad\text{i.e.}\qquad b^{1-\mu}>C\,\varepsilon^{-(p-1)} .
\]
Since $1-\mu=\dfrac{2s-\gamma}{2s}$, this reads
$b>C'\varepsilon^{-2s(p-1)/(2s-\gamma)}$. Hence
\begin{equation}\label{eq:ld-upper}
   T_\varepsilon\;\le\;C'\,\varepsilon^{-\frac{2s(p-1)}{2s-\gamma}} .
\end{equation}
Combining~\eqref{eq:ld-lower} and~\eqref{eq:ld-upper} proves
Theorem~\ref{thm:ld}. \qed

\begin{rem}[On the hypothesis]\label{rem:ld-hyp}\rm
A lower bound on $u_0$ at a \emph{single, fixed} scale is insufficient: if
$u_0\ge c_0$ only on a fixed ball $B_{r_0}$ and decays outside, then
$\int u_0\,G_s(b,\cdot)\lesssim b^{-N/(2s)}$, the datum reverts to the
small-data form~\eqref{eq:ub-datum}, and one recovers the longer Fujita
lifespan of Theorem~\ref{thm:main} rather than
$\varepsilon^{-2s(p-1)/(2s-\gamma)}$. The mechanism here operates at the
scale $b^{1/(2s)}\sim\varepsilon^{-(p-1)/(2s-\gamma)}\to\infty$, so the datum
must be bounded below at that growing scale. Condition~\eqref{eq:ld-hyp} may
be relaxed to $\liminf_{|x|\to\infty}u_0(x)>0$, or, sharply, to
$\inf_{b\ge b_0}\int_{\R^N}u_0\,G_s(b,\cdot)\,dx>0$.
\end{rem}

\begin{rem}[Removing the range restriction]\label{rem:ld-eig}\rm
The constant $C_*$ confines the argument above to $\gamma<2s(p-1)$. For the
residual range $1<p\le1+\gamma/(2s)$ the same rate follows for \emph{every}
$p>1$ from the principal Dirichlet eigenfunction $\phi_\rho(x)=\phi_1(x/\rho)$
of the restricted fractional Laplacian on the ball $B_\rho$ (its existence
and positivity are classical, \cite{ServadeiValdinoci2013}), with eigenvalue
$\lambda_\rho=\lambda_1\rho^{-2s}$. With $F(t)=\int u\,\phi_\rho$, the
favorable sign of $\Lap\phi_\rho$ outside $B_\rho$ (where $\phi_\rho=0$, so
$\Lap\phi_\rho<0$) and H\"older give
\[
   F'(t)\;\ge\;-\lambda_\rho\,F(t)+A_\rho^{-(p-1)}F(t)^p,\qquad
   A_\rho=\!\int_{B_\rho}\!|x|^{\frac{\gamma}{p-1}}\phi_\rho\,dx<\infty,
\]
where $A_\rho<\infty$ for \emph{every} $\rho$ since $B_\rho$ is bounded (no
constraint on $\gamma/(p-1)$). With $F(0)\gtrsim\varepsilon\rho^N$
(legitimate because $u_0\ge c_0$ on $B_\rho$ by~\eqref{eq:ld-hyp}),
$A_\rho\asymp\rho^{N+\gamma/(p-1)}$ and $\lambda_\rho=\lambda_1\rho^{-2s}$,
the choice $\rho\asymp\varepsilon^{-(p-1)/(2s-\gamma)}$ makes the nonlinear
term dominate and yields blow-up by time
$\asymp A_\rho^{p-1}F(0)^{-(p-1)}\asymp\rho^{\gamma}\varepsilon^{-(p-1)}
=\varepsilon^{-2s(p-1)/(2s-\gamma)}$.
\end{rem}

\begin{rem}\rm
For $\gamma=0$ one has $C_*=\int\mathscr{K}_s=1$ (no restriction) and
$T_\varepsilon\approx\varepsilon^{-(p-1)}$, the classical ODE-driven
large-data lifespan for $u_t+\Lap u=u^p$; here $g_b'\ge c_1g_b^p$ reduces to
comparison with the spatially homogeneous solution. For $\gamma>0$ the
exponent $\tfrac{2s(p-1)}{2s-\gamma}>p-1$, so the singular weight shortens
the lifespan.
\end{rem}

\section{Blow-up rate: Type-I bounds}\label{sec:typeI}

We turn to the blow-up rate, assuming in addition $u_0\ge0$; by
Proposition~\ref{prop:contraction} and iteration of~\eqref{eq:duhamel} the
mild solution stays nonnegative on $[0,T_\varepsilon)$. Throughout this
section we write, to avoid clashing with the dimension $N$,
\[
  \mathcal N(t):=\|u(t)\|_{L^\infty},\qquad \mathcal L(t):=\|u(t)\|_{L^1},
  \qquad T:=T_\varepsilon,
\]
and we assume $L^\infty$ blow-up at $T<\infty$, i.e.\ $\mathcal N(t)\to\infty$
as $t\uparrow T$. Two rates are relevant: the \emph{ODE rate}
$1/(p-1)$ and the \emph{self-similar rate}
\begin{equation}\label{eq:vartheta-def}
   \vartheta\;:=\;\frac{2s-\gamma}{2s(p-1)}
   \;=\;\frac{1}{p-1}\Bigl(1-\frac{\gamma}{2s}\Bigr)\;\le\;\frac{1}{p-1},
\end{equation}
with equality iff $\gamma=0$ (cf.~\eqref{eq:ss-exp} below). The weight
$|x|^{-\gamma}$ separates two scenarios: where blow-up concentrates at the
origin, the active scaling is~\eqref{eq:scaling} and the rate is the slower
$\vartheta$; at an interior point $x_0\ne0$ the weight is comparable to a
constant and the local dynamics is governed by the ODE rate $1/(p-1)$.

\begin{defn}\label{def:typeI}
A solution blowing up at $T<\infty$ exhibits \emph{Type-I}
blow-up if $\mathcal N(t)\le C\,(T-t)^{-1/(p-1)}$ as
$t\uparrow T$, and \emph{Type-II} otherwise.
\end{defn}

For the unweighted nonlocal Fujita equation, Type-I blow-up of nonnegative
subcritical solutions is established
in~\cite{Sugitani1975,Fino,IkedaSobajima2019}. The next results extend the
\emph{lower}-rate side of this picture to the Hardy--H\'enon case in full
rigour, and clarify the precise (conditional) status of the matching upper
bound.

\begin{theo}[Blow-up rate]\label{thm:typeI}
Let $1<p\le p_F$ and $u_0\in L^1\cap L^\infty$ with $u_0\ge0$ and
$\int u_0>0$, and suppose the mild solution of~\eqref{eq:main} undergoes
$L^\infty$ blow-up at $T<\infty$. Then:
\begin{enumerate}\setlength\itemsep{2pt}
\item[\textup{(a)}] \textup{(Universal self-similar lower bound.)}
Unconditionally,
\[
  \mathcal N(t)\;\ge\;\tau_0^{\,\vartheta}\,(T-t)^{-\vartheta},
  \qquad 0<t<T,
\]
for a constant $\tau_0=\tau_0(N,s,\gamma,p)>0$; in particular blow-up is
never slower than self-similar.
\item[\textup{(b)}] \textup{(Interior ODE lower bound.)}
If, in addition, the blow-up is realised away from the origin in the sense
of~\eqref{eq:H-int}, then
\[
  \mathcal N(t)\;\ge\;\bigl[(p-1)\delta^{-\gamma}\bigr]^{-1/(p-1)}(T-t)^{-1/(p-1)},
  \qquad t\in[t_0,T).
\]
\item[\textup{(c)}] \textup{(Conditional Type-I upper bound.)}
If the solution satisfies the self-similar Type-I a priori bound
\eqref{eq:typeI-ss}, then $\mathcal N(t)\le C(T-t)^{-\vartheta}\le
C(T-t)^{-1/(p-1)}$; that is, Type-I blow-up holds.
\end{enumerate}
Consequently, in the interior regime \eqref{eq:H-int} \textup{(}with the ODE
Type-I bound of Definition~\textup{\ref{def:typeI})} the matching two-sided
rate holds,
\begin{equation}\label{eq:typeI}
  \mathcal N(t)\;\sim\;(T-t)^{-1/(p-1)}\qquad\text{as }t\uparrow T,
\end{equation}
while in the origin regime \textup{(}under \eqref{eq:typeI-ss}\textup{)}
$\mathcal N(t)\sim(T-t)^{-\vartheta}$. For $\gamma=0$ one has
$\vartheta=1/(p-1)$, parts \textup{(a)}--\textup{(b)} coincide and are
unconditional, the qualification~\eqref{eq:H-int} is automatic, and the only
conditional input is the exclusion of Type-II blow-up in \textup{(c)}.
\end{theo}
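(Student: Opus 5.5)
Parts (a) and (b) will come from a backward-in-time local existence argument: if $\mathcal N(t)$ were too small at time $t$, the solution could be continued past $T$. Part (c) is a bookkeeping consequence of the assumed a priori bound. The conditions \eqref{eq:H-int} and \eqref{eq:typeI-ss} are stated after the theorem, so I take them to mean, respectively: for $t\in[t_0,T)$ the supremum of $u(t)$ is attained (or approximated) in $\{|x|\ge\delta\}$, and $\mathcal N(t)\le C(T-t)^{-\vartheta}$. Under the second reading, (c) is immediate: since $T-t\le T$ and $\vartheta\le 1/(p-1)$, we get $(T-t)^{-\vartheta}\le T^{1/(p-1)-\vartheta}(T-t)^{-1/(p-1)}$. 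The two-sided statements then follow by combining (a) with (c) and (b) with Definition~\ref{def:typeI}.

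For (a), fix $t\in(0,T)$ and restart the problem from the datum $v_0=u(t)\in L^1\cap L^\infty$. I will run the $L^\infty$ contraction of the large-data section on $B_\tau=\{\|v\|_{X_\tau}\le 2\mathcal N(t)\}$, using Lemma~\ref{lem:weighted-Linfty} exactly as in the proof of \eqref{eq:ld-lower}. This gives
\[
\|\Phi(v)(\tau')\|_{L^\infty}\le \mathcal N(t)+C\,(2\mathcal N(t))^p\,\tau^{1-\gamma/(2s)}.
\]
The map is therefore a self-map and a contraction on $[0,\tau]$ whenever $C2^p\mathcal N(t)^{p-1}\tau^{1-\gamma/(2s)}\le\frac12$. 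That is, the solution exists on a time interval of length $\tau=\tau_0\,\mathcal N(t)^{-(p-1)/(1-\gamma/(2s))}=\tau_0\,\mathcal N(t)^{-1/\vartheta}$, with $\tau_0$ depending only on $N,s,\gamma,p$. The $L^1$ norm also stays finite on this interval by \eqref{eq:nonlin-L1}-type bounds (using the $L^r$ control of Theorem~\ref{thm:local}). By uniqueness and the blow-up alternative \eqref{eq:blowup-alt}, $t+\tau\le T$, i.e. $\mathcal N(t)^{-1/\vartheta}\le\tau_0^{-1}(T-t)$, which rearranges to the bound in (a). The one point to check is that $L^\infty$-mild solutions coincide with the $L^1\cap L^\infty$ solution, so that a bounded continuation contradicts blow-up at $T$. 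This follows by running the fixed point in $L^1\cap L^\infty$ with the same time scale, since the $L^1$ part only costs a factor $\tau$.

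For (b), I will use an ODE comparison. On $\{|x|\ge\delta\}$ we have $|x|^{-\gamma}u^p\le\delta^{-\gamma}\mathcal N^p$. Nonnegativity and the comparison principle for the semigroup then suggest $\mathcal N'(t)\le\delta^{-\gamma}\mathcal N(t)^p$ in the sense of upper Dini derivatives. I will justify this from Duhamel: write $u(t+h)=e^{-h\Lap}u(t)+\int_t^{t+h}\cdots$, bound the first term by $\mathcal N(t)$, and use \eqref{eq:H-int} so that, at the near-maximising points, the forcing is effectively $\delta^{-\gamma}\mathcal N^p$ up to $o(h)$. Integrating $-\frac{d}{dt}\mathcal N^{-(p-1)}\le(p-1)\delta^{-\gamma}$ from $t$ to $T$, where $\mathcal N^{-(p-1)}\to0$, gives $\mathcal N(t)^{-(p-1)}\le(p-1)\delta^{-\gamma}(T-t)$, which is the claim.

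I expect this last step to be the main obstacle. The contribution of the singular region $|x|<\delta$ to the $L^\infty$ norm near the maximising points is not obviously $o(h)$, because $e^{-h\Lap}$ spreads mass from the origin nonlocally. I would first try to read \eqref{eq:H-int} as precisely the hypothesis that controls this, namely that the Duhamel forcing evaluated on near-maximisers is bounded by $\delta^{-\gamma}\mathcal N^p$.
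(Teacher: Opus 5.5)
Your parts (a) and (c) are sound and match the paper. The paper obtains the same existence time $\tau_0\,\mathcal N(t)^{-1/\vartheta}$ by rescaling to unit data (Lemma~\ref{lem:exist-time}) and applying it to $v=u(\cdot,t)$, which is equivalent to your direct contraction on the ball of radius $2\mathcal N(t)$.

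The gap is in (b). The step you flag as the main obstacle is left unproved. The remedy you propose is to read \eqref{eq:H-int} as a bound on the Duhamel forcing at near-maximisers, but that replaces the hypothesis of the theorem by a different and stronger one. Hypothesis \eqref{eq:H-int} only says that $\mathcal N(t)=\max_{|x|\ge\delta}u(x,t)$ for $t\in[t_0,T)$. It locates the maximum and says nothing directly about how the source near the origin feeds the solution on $\{|x|\ge\delta\}$. So, as written, (b) is not established.

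The obstacle is not real, however, and your Duhamel route closes with one kernel estimate. Lemma~\ref{lem:Ks-bounds} together with $\langle w\rangle\ge|w|$ gives $G_s(\sigma,z)\le c_2\,\sigma\,|z|^{-N-2s}$. Hence mass emitted near the origin reaches distance $\ge\delta-\delta'$ only with amplitude $O(\sigma)$, i.e.\ $O(h^2)$ after time integration. Precisely, take $|x|\ge\delta$ and $0<\delta'<\delta$, split the source at $|y|=\delta'$, and write $S_h:=\sup_{[t,t+h]}\mathcal N^p$. Then
\[
u(x,t+h)\le\mathcal N(t)+h\,(\delta')^{-\gamma}S_h+\frac{c_2h^2}{2}\,(\delta-\delta')^{-N-2s}\,S_h\int_{|y|<\delta'}|y|^{-\gamma}\,dy .
\]
Now take the supremum over $|x|\ge\delta$; this is exactly where \eqref{eq:H-int} enters. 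Continuity of $\mathcal N$ then gives $\frac{d^{+}}{dt}\mathcal N(t)\le(\delta')^{-\gamma}\mathcal N(t)^p$, and letting $\delta'\uparrow\delta$ yields \eqref{eq:Ndini}.

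At that point you only have a Dini bound for a continuous function, so your ``integration'' must go through the standard monotonicity lemma, not the fundamental theorem of calculus. The lemma says a continuous function whose lower right Dini derivative is $\ge0$ everywhere is nondecreasing. Apply it to $\mathcal N^{-(p-1)}+(p-1)\delta^{-\gamma}t$.

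For comparison, the paper proves \eqref{eq:Ndini} differently (Lemma~\ref{lem:nl-max}). It evaluates the equation pointwise at a global maximiser $x_t$ with $|x_t|\ge\delta$ and uses $\Lap u(x_t,t)\ge0$. It then gets the derivative of the maximum from Danskin's theorem on a compact annulus. That route needs classical regularity of the mild solution away from the origin, and in return gives local Lipschitz continuity of $\mathcal N$. Your completed Duhamel argument needs neither regularity nor Danskin: it works directly with the mild formulation and the kernel bounds, at the cost of the Dini-derivative bookkeeping.
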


The proof occupies the rest of this section: \eqref{eq:H-int} and the
maximum-principle Lemma~\ref{lem:nl-max} below give~(b); the scaling
Lemma~\ref{lem:exist-time} gives~(a); and~(c) is immediate
from~\eqref{eq:typeI-ss}. We then assemble the statement and comment on the
direction of the comparison and on the relation with the profile analysis of
Section~\ref{sec:profile}.

\subsection{Regularity and the maximum-principle inequality}

For $t>0$ the mild solution satisfies
$u(\cdot,t)\in C_0(\R^N)$ and
$u(\cdot,t)\in C^2_{\mathrm{loc}}(\R^N\setminus\{0\})$, and~\eqref{eq:main}
holds pointwise on $(\R^N\setminus\{0\})\times(0,T)$ with
$\partial_tu$ and $\Lap u$ continuous there. Indeed, $e^{-t\Lap}u_0$ is
smooth and vanishes at infinity (Lemma~\ref{lem:Ks-bounds}), while the
Duhamel term has source $|x|^{-\gamma}u^p\in L^r$ (cf.~\eqref{eq:choice-r}),
smooth away from the origin; interior fractional parabolic
regularity~\cite{CaffarelliVasseur2010,ChenKim2002} promotes the mild
solution to a classical one on $\{|x|\ge\delta\}\times(0,T)$ for every
$\delta>0$.

The blow-up qualification ``$x_0\ne0$'' is used in the following precise
form: there exist $t_0\in(0,T)$ and $\delta\in(0,1]$ with
\begin{equation}\label{eq:H-int}
   \mathcal N(t)=\max_{|x|\ge\delta}u(x,t)\qquad\text{for all }t\in[t_0,T).
\end{equation}
Condition~\eqref{eq:H-int} holds whenever the blow-up set is contained in
$\{|x|\ge\delta\}$, so that the global maximum is eventually realised away
from the origin --- in particular, when the solution blows up at a single
point $x_0\ne0$ while remaining locally bounded near $x=0$.

\begin{lem}[Nonlocal maximum-principle inequality]\label{lem:nl-max}
Under~\eqref{eq:H-int}, the function $t\mapsto\mathcal N(t)$ is locally
Lipschitz on $[t_0,T)$ and
\begin{equation}\label{eq:Ndini}
   \frac{d^{+}}{dt}\,\mathcal N(t)\;\le\;\delta^{-\gamma}\,\mathcal N(t)^{p},
   \qquad t\in[t_0,T),
\end{equation}
where $\frac{d^{+}}{dt}$ denotes the upper right Dini derivative.
\end{lem}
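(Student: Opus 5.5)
The plan is the classical Friedman--McLeod/Weissler argument: take a maximum point of $u(\cdot,t)$ in $\{|x|\ge\delta\}$ and use the equation there, with the nonlocal operator supplying a favourable sign. Fix $t_1\in[t_0,T)$. By~\eqref{eq:H-int} and $u(\cdot,t_1)\in C_0(\R^N)$, the maximum $\mathcal N(t_1)$ is attained at some $x_1$ with $|x_1|\ge\delta$. We also need $u\ge0$, so that $\mathcal N(t)=\max u(\cdot,t)$ is the global sup of $|u|$.

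\emph{Step 1: pointwise inequality at the maximiser.} At $x_1$ the singular integral representation gives
\[
(-\Delta)^s u(x_1,t_1)=c_{N,s}\,\mathrm{P.V.}\!\int\frac{u(x_1,t_1)-u(y,t_1)}{|x_1-y|^{N+2s}}\,dy\ge0,
\]
because $u(x_1,t_1)\ge u(y,t_1)$ for all $y$. The integral is well defined since $u(\cdot,t_1)$ is $C^2$ near $x_1$ and bounded. Since the equation holds pointwise on $\{|x|\ge\delta\}$, we get
\[
\partial_t u(x_1,t_1)\le |x_1|^{-\gamma}u(x_1,t_1)^p\le\delta^{-\gamma}\mathcal N(t_1)^p,
\]
using $\delta\le1$ and $|x_1|\ge\delta$.

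\emph{Step 2: Lipschitz continuity.} On any compact $[t_0,t_2]\subset[t_0,T)$, Duhamel's formula gives $\mathcal N$ as a bounded function of time. Write $u(t+h)=e^{-h(-\Delta)^s}u(t)+\int_t^{t+h}\cdots$. Contraction (Proposition~\ref{prop:contraction}) together with the weighted bound of Lemma~\ref{lem:weighted-Linfty} yields
\[
\mathcal N(t+h)\le\mathcal N(t)+Ch^{1-\gamma/(2s)}\sup\mathcal N^p,
\]
which is Hölder continuity. For genuine Lipschitz continuity I would use the uniform interior bounds on $\partial_t u$ over $\{|x|\ge\delta\}\times[t_0,t_2]$ from the parabolic regularity invoked above. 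The global maximum stays in that region by~\eqref{eq:H-int}, so $|\mathcal N(t)-\mathcal N(t')|\le\sup_{|x|\ge\delta}|\partial_tu|\,|t-t'|$, via the standard $\max_x u(x,t)-\max_x u(x,t')$ comparison at the respective maximisers.

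\emph{Step 3: the Dini derivative (main obstacle).} Here the maximiser may jump, and the maximum may escape to infinity or into $\{|x|<\delta\}$. Take $h_k\downarrow0$ realising the $\limsup$ in $\frac{d^+}{dt}\mathcal N(t_1)$, and let $x_k$ be maximisers at $t_1+h_k$ with $|x_k|\ge\delta$. Since $u(\cdot,t)\in C_0$ uniformly for $t$ near $t_1$, and $\mathcal N\ge\mathcal N(t_0)>0$, the $x_k$ stay in a compact set. Passing to a subsequence, $x_k\to\bar x$, and continuity forces $\bar x$ to be a maximiser at $t_1$. Then
\[
\frac{\mathcal N(t_1+h_k)-\mathcal N(t_1)}{h_k}\le\frac{u(x_k,t_1+h_k)-u(x_k,t_1)}{h_k}=\partial_t u(x_k,t_1+\theta_kh_k)
\]
by the mean value theorem. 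By continuity of $\partial_t u$ on $\{|x|\ge\delta\}\times(0,T)$, the right-hand side tends to $\partial_tu(\bar x,t_1)$. Step 1 applied at $\bar x$ bounds this by $\delta^{-\gamma}\mathcal N(t_1)^p$, which gives~\eqref{eq:Ndini}.

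The delicate points are the compactness of the maximisers, the joint continuity of $\partial_t u$ up to $|x|=\delta$, and the pointwise validity of the equation; all rest on the quoted regularity. If that regularity is not available in this form, I would instead mollify in time and work with the mild formulation directly.
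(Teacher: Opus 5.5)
Your proof is correct and is essentially the paper's argument: the paper also confines all maximisers to a compact annulus $\{\delta\le|x|\le R\}$, using the uniform decay of the compact family $\{u(\cdot,t)\}_{t\in[t_1,t_2]}$ in $C_0(\R^N)$. It then uses $(-\Delta)^s u\ge0$ at a global maximum and gets both the Lipschitz property and the Dini bound from Danskin's theorem, whose relevant half you reprove by hand via the convergent maximisers $x_k\to\bar x$ and the mean value theorem.

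There is one slip to fix. The claim $\mathcal N\ge\mathcal N(t_0)$ need not hold, since $\mathcal N$ is not monotone. Use instead $\min_{[t_1,t_2]}\mathcal N>0$, which follows from continuity and positivity of $\mathcal N$. Also take the bound on $|\partial_t u|$ in Step~2 over that compact annulus rather than over the unbounded set $\{|x|\ge\delta\}$.
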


\begin{proof}
Fix $[t_1,t_2]\subset[t_0,T)$. The map $t\mapsto u(\cdot,t)$ is continuous
from $[t_1,t_2]$ into $C_0(\R^N)$, so its image is a compact subset of
$C_0(\R^N)$ and is therefore uniformly tight: there is $R\ge\delta$ with
$$\sup\limits_{|x|\ge R}\sup_{t\in[t_1,t_2]}u(x,t)<\min_{t\in[t_1,t_2]}\mathcal N(t),$$
the latter minimum being positive by continuity. Combined
with~\eqref{eq:H-int}, every maximiser of $u(\cdot,t)$, $t\in[t_1,t_2]$,
lies in the fixed compact annulus $A:=\{\delta\le|x|\le R\}$. On $A$ the
maps $u,\partial_t u$ are continuous, so by Danskin's theorem
$\mathcal N(t)=\max_{x\in A}u(x,t)$ is locally Lipschitz with
\[
  \frac{d^{+}}{dt}\,\mathcal N(t)
  \;=\;\max\bigl\{\partial_t u(x_t,t):x_t\in M(t)\bigr\},
  \qquad M(t):=\operatorname*{arg\,max}_{x\in A}u(\cdot,t).
\]
Let $x_t\in M(t)$, so $|x_t|\ge\delta$ and $u(x_t,t)=\mathcal N(t)$. As
$x_t$ is a global maximiser of $u(\cdot,t)$,
\[
  \Lap u(x_t,t)
  =c_{N,s}\,\mathrm{P.V.}\!\int_{\R^N}\frac{u(x_t,t)-u(y,t)}{|x_t-y|^{N+2s}}\,dy\;\ge\;0,
\]
the integrand being nonnegative. Evaluating~\eqref{eq:main} at $(x_t,t)$,
\begin{align*}\partial_t u(x_t,t)
 & =-\Lap u(x_t,t)+|x_t|^{-\gamma}u(x_t,t)^{p}
  \\&\le|x_t|^{-\gamma}\,\mathcal N(t)^{p}
  \\&\le\delta^{-\gamma}\,\mathcal N(t)^{p}.\end{align*}
Taking the maximum over $M(t)$ gives~\eqref{eq:Ndini}.
\end{proof}

\subsection{Lower bounds on the blow-up rate}

\begin{prop}[Interior ODE lower rate]\label{prop:typeI-low}
Under~\eqref{eq:H-int},
\begin{equation}\label{eq:typeI-low}
   \mathcal N(t)\;\ge\;\bigl[(p-1)\,\delta^{-\gamma}\bigr]^{-1/(p-1)}
                       (T-t)^{-1/(p-1)},
   \qquad t\in[t_0,T).
\end{equation}
In particular $\mathcal N(t)\gtrsim(T-t)^{-1/(p-1)}$ as $t\uparrow T$.
\end{prop}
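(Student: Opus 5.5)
The plan is to integrate the differential inequality \eqref{eq:Ndini} of Lemma~\ref{lem:nl-max} backward from $T$, as in the classical ODE comparison. Since $\mathcal N$ is locally Lipschitz and positive on $[t_0,T)$ (positivity from $u_0\ge0$, $\int u_0>0$ and the strict positivity of $G_s$, which makes $\varepsilon e^{-t\Lap}u_0>0$), I will work with $\Psi(t):=\mathcal N(t)^{-(p-1)}$. This function is again locally Lipschitz on $[t_0,T)$.

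The first step is to transfer the Dini bound to $\Psi$. The map $y\mapsto y^{-(p-1)}$ is $C^1$ and decreasing on $(0,\infty)$. By the chain rule for Dini derivatives composed with a $C^1$ decreasing function, the lower right Dini derivative satisfies
\[
\frac{d_{+}}{dt}\Psi(t)=-(p-1)\,\mathcal N(t)^{-p}\,\frac{d^{+}}{dt}\mathcal N(t)\;\ge\;-(p-1)\,\delta^{-\gamma}.
\]
Since $\Psi$ is locally Lipschitz, it is absolutely continuous on compact subintervals and differentiable a.e. At points of differentiability $\Psi'(t)\ge-(p-1)\delta^{-\gamma}$. Integrating over $[t,\tau]\subset[t_0,T)$ then gives
\[
\Psi(t)-\Psi(\tau)\;\le\;(p-1)\,\delta^{-\gamma}\,(\tau-t).
\]
Alternatively, one can avoid a.e. differentiability by appealing to the standard fact that a continuous function with lower Dini derivative $\ge -K$ satisfies $f(\tau)-f(t)\ge -K(\tau-t)$.

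The second step is to let $\tau\uparrow T$. By the standing assumption of $L^\infty$ blow-up, $\mathcal N(\tau)\to\infty$, so $\Psi(\tau)\to0$. This yields $\mathcal N(t)^{-(p-1)}\le(p-1)\delta^{-\gamma}(T-t)$, and taking the $-1/(p-1)$ power gives exactly \eqref{eq:typeI-low}.

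The main obstacle is only the justification of the Dini-derivative calculus. Lemma~\ref{lem:nl-max} already supplies local Lipschitz continuity, so the monotonicity-type lemma for Dini derivatives (or absolute continuity) closes the argument. No uniformity issue arises near $T$, since the estimate is applied on each $[t,\tau]$ with $\tau<T$ before passing to the limit.
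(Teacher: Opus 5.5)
Your proposal is correct and follows essentially the same route as the paper. The paper likewise passes to $\mathcal N^{-(p-1)}$, uses the local Lipschitz property from Lemma~\ref{lem:nl-max} to integrate the a.e.\ bound $\frac{d}{dt}\mathcal N^{-(p-1)}\ge-(p-1)\delta^{-\gamma}$ over $[t,t']$, and lets $t'\uparrow T$ using $\mathcal N(t')\to\infty$. Your explicit Dini-derivative chain rule and the justification of positivity only make steps precise that the paper states briefly.
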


\begin{proof}
By Lemma~\ref{lem:nl-max}, $\mathcal N$ is locally Lipschitz and positive on
$[t_0,T)$ and satisfies~\eqref{eq:Ndini} for a.e.\ $t$; hence
$\mathcal N^{-(p-1)}$ is locally Lipschitz and
\[
  \frac{d}{dt}\,\mathcal N(t)^{-(p-1)}
  =-(p-1)\,\mathcal N(t)^{-p}\,\mathcal N'(t)
  \;\ge\;-(p-1)\,\delta^{-\gamma}
  \qquad\text{for a.e.\ }t\in[t_0,T).
\]
Integrating over $[t,t']\subset[t_0,T)$,
\[
  \mathcal N(t')^{-(p-1)}-\mathcal N(t)^{-(p-1)}\;\ge\;-(p-1)\,\delta^{-\gamma}\,(t'-t).
\]
Letting $t'\uparrow T$ and using $\mathcal N(t')\to\infty$ (so
$\mathcal N(t')^{-(p-1)}\to0$) yields
$$\mathcal N(t)^{-(p-1)}\le(p-1)\,\delta^{-\gamma}\,(T-t),$$ which
is~\eqref{eq:typeI-low}.
\end{proof}

\begin{coro}[Unweighted case]\label{cor:gamma0}
If $\gamma=0$, then $$\mathcal N(t)\ge[(p-1)(T-t)]^{-1/(p-1)}$$ for
\emph{every} blowing-up solution, with no restriction on the location of the
maximum: the proof of Lemma~\ref{lem:nl-max} gives
$\partial_tu(x_t,t)\le\mathcal N(t)^p$ at any maximiser because the weight is
identically $1$, and~\eqref{eq:H-int} is vacuous. This is the
``qualification is automatic'' statement of Theorem~\ref{thm:typeI}.
\end{coro}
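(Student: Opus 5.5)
The plan is to rerun the argument of Lemma~\ref{lem:nl-max} and Proposition~\ref{prop:typeI-low} with $\gamma=0$ and no localisation away from the origin. Since $\gamma=0$, the source $|u|^p$ is no longer singular at $x=0$. The regularity discussion preceding \eqref{eq:H-int} then holds on all of $\R^N\times(0,T)$. Indeed, $e^{-t\Lap}u_0$ is smooth and vanishes at infinity. The Duhamel term has source $u^p\in L^1\cap L^\infty$, which is bounded and continuous for $t>0$. Interior fractional parabolic regularity therefore makes $u$ classical on $\R^N\times(0,T)$, with $u(\cdot,t)\in C_0(\R^N)$. Moreover $u\ge0$, because $u_0\ge0$.

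Next comes the Lipschitz and Danskin step. Fix $[t_1,t_2]\subset(0,T)$; by continuity, $\min_{[t_1,t_2]}\mathcal N>0$. Compactness of $\{u(\cdot,t):t\in[t_1,t_2]\}$ in $C_0(\R^N)$ gives an $R$ with $\sup_{|x|\ge R}u<\min\mathcal N$ on that interval. Hence every maximiser lies in the closed ball $\overline{B_R}$. No inner radius $\delta$ is needed: this is exactly where \eqref{eq:H-int} becomes vacuous. Danskin's theorem then shows that $\mathcal N$ is locally Lipschitz, and that $\frac{d^+}{dt}\mathcal N(t)=\max\partial_tu(x_t,t)$ over maximisers $x_t$.

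At any global maximiser, the principal-value integral gives $\Lap u(x_t,t)\ge0$, exactly as in Lemma~\ref{lem:nl-max}. The equation then yields $\partial_t u(x_t,t)\le u(x_t,t)^p=\mathcal N(t)^p$, since the weight is identically $1$. Hence $\frac{d^+}{dt}\mathcal N\le\mathcal N^p$ on $(0,T)$.

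The ODE comparison then goes as in Proposition~\ref{prop:typeI-low}. Because $\mathcal N^{-(p-1)}$ is locally Lipschitz, it satisfies $\frac{d}{dt}\mathcal N^{-(p-1)}\ge-(p-1)$ almost everywhere. Integrate this from $t$ to $t'$ and let $t'\uparrow T$, using $\mathcal N(t')\to\infty$. This gives $\mathcal N(t)^{-(p-1)}\le(p-1)(T-t)$, which is the claim for every $t\in(0,T)$.

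The only point needing care is the full-space regularity and tightness used to locate maximisers in a compact set. I expect this to be the main obstacle, though it is mild. With $\gamma=0$ the source is bounded, so the regularity argument is simpler than in the weighted case. Tightness follows from $u(\cdot,t)\in C_0$ together with continuity in time into $C_0$, which comes from the $L^1\cap L^\infty$ Duhamel formula and the $L^r\to L^\infty$ smoothing of Proposition~\ref{prop:Lp-Lq}.
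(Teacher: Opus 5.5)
Your proposal is correct and follows the paper's argument: you rerun Lemma~\ref{lem:nl-max} with the weight identically $1$, then apply the ODE comparison of Proposition~\ref{prop:typeI-low} with $\delta^{-\gamma}=1$. Your explicit remark that for $\gamma=0$ the classical regularity extends up to $x=0$ is a useful precision, since it is what allows the maximiser to lie anywhere, including at the origin; the paper's one-line justification leaves this implicit.
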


The interior bound~\eqref{eq:typeI-low} degrades as the maximisers approach
the origin, since $|x_t|^{-\gamma}$ is then unbounded. The next scale-covariant
estimate replaces it everywhere and is fully unconditional.

\begin{lem}[Scale-covariant existence time]\label{lem:exist-time}
There is a constant $\tau_0=\tau_0(N,s,\gamma,p)>0$ such that, for every
$v\in L^\infty(\R^N)$, the mild solution of~\eqref{eq:main} with datum $v$
(i.e.\ $\varepsilon u_0$ replaced by $v$) exists on $[0,\tau^*(v))$ with
\begin{equation}\label{eq:exist-time}
   \tau^*(v)\;\ge\;\tau_0\,\|v\|_{L^\infty}^{-1/\vartheta}
   \;=\;\tau_0\,\|v\|_{L^\infty}^{-\frac{2s(p-1)}{2s-\gamma}} .
\end{equation}
No sign or integrability condition on $v$ is required.
\end{lem}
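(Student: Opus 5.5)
The plan is a contraction argument in $L^\infty$ alone, exactly as in the lower bound of Section~\ref{sec:largedata}, followed by a check that the resulting existence time has the claimed scaling. Set $K:=\|v\|_{L^\infty}$; if $K=0$ the solution is identically zero and there is nothing to prove, so assume $K>0$. For $T>0$ work in the ball
\[
  B_T:=\bigl\{u\in L^\infty((0,T)\times\R^N):\ \sup_{0<t<T}\|u(t)\|_{L^\infty}\le 2K\bigr\},
\]
with the map
\[
  \Phi(u)(t)=e^{-t\Lap}v+\int_0^t e^{-(t-\tau)\Lap}\bigl(|x|^{-\gamma}|u(\tau)|^p\bigr)\,d\tau .
\]
Since the datum is only in $L^\infty$, I will not use the $L^1\cap L^\infty$ framework of Theorem~\ref{thm:local}. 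The key estimate is the weighted smoothing Lemma~\ref{lem:weighted-Linfty}, which holds for all $0\le\gamma<\min(2s,N)$. Together with the contraction property of Proposition~\ref{prop:contraction}, it gives
\[
  \|\Phi(u)(t)\|_{L^\infty}\le K+C\int_0^t(t-\tau)^{-\gamma/(2s)}(2K)^p\,d\tau
  = K+\frac{C\,2^p}{1-\gamma/(2s)}\,K^p\,t^{1-\gamma/(2s)} .
\]
The integral converges because $\gamma<2s$.

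For the contraction step, I use the inequality $\bigl||a|^p-|b|^p\bigr|\le p(|a|^{p-1}+|b|^{p-1})|a-b|$. Combined with Lemma~\ref{lem:weighted-Linfty}, it yields
\[
  \|\Phi(u)-\Phi(v')\|_{X_T}\le C' K^{p-1}T^{1-\gamma/(2s)}\|u-v'\|_{X_T}.
\]
Both requirements, self-mapping $\Phi(B_T)\subset B_T$ and Lipschitz constant $\le 1/2$, then reduce to a single condition:
\[
  C''K^{p-1}T^{1-\gamma/(2s)}\le \tfrac12 .
\]
Here $C''$ depends only on $N,s,\gamma,p$ through the constant of Lemma~\ref{lem:weighted-Linfty}. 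Solving for $T$ gives
\[
  T=\tau_0K^{-(p-1)/(1-\gamma/(2s))}=\tau_0K^{-1/\vartheta},
\]
since $(p-1)\cdot\frac{2s}{2s-\gamma}=1/\vartheta$ by \eqref{eq:vartheta-def}. Banach's theorem then produces a mild solution on $[0,T]$, bounded by $2K$. Uniqueness in $L^\infty$ and the continuation argument of Step~5 of Theorem~\ref{thm:local}, rerun in $L^\infty$, give $\tau^*(v)\ge T$, which is \eqref{eq:exist-time}. No sign or integrability of $v$ entered anywhere.

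The scale-invariance provides a consistency check. The rescaling \eqref{eq:scaling} maps $L^\infty$ data of size $K$ to data of size $\lambda^{(2s-\gamma)/(p-1)}K$ and time to $\lambda^{-2s}t$. One could therefore alternatively prove existence on $[0,\tau_0]$ for $\|v\|_\infty\le1$ and rescale with $\lambda=K^{-(p-1)/(2s-\gamma)}$. I would mention this as a remark confirming that $\tau_0$ is dimensionless.

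The main obstacle I expect is the meaning of ``mild solution'' in $L^\infty$ near the singular weight. $|x|^{-\gamma}|u|^p$ is not bounded, so one must verify that the Duhamel integral is well defined and measurable in $(t,x)$; Lemma~\ref{lem:weighted-Linfty} handles boundedness. One must also check that, when $v\in L^1\cap L^\infty$, this $L^\infty$ solution coincides with the solution of Theorem~\ref{thm:local}. I would handle the latter by noting that both are fixed points in $B_{T'}$ for small $T'$, so they agree by the contraction uniqueness, and then iterating. Continuity in time into $L^\infty$ may fail for merely bounded data, so I would only claim weak-$*$ continuity, which suffices for the blow-up application in Theorem~\ref{thm:typeI}(a).
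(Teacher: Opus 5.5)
Your proof is correct. It uses the same key ingredient as the paper's proof: an $L^\infty$ contraction driven by the weighted smoothing estimate of Lemma~\ref{lem:weighted-Linfty}. The difference is in how the rate $\|v\|_{L^\infty}^{-1/\vartheta}$ is extracted.

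The paper proves existence on a universal interval $[0,\tau_0]$ only for $\|v\|_{L^\infty}\le 1$, using the ball of radius $2$. It then transports this to arbitrary data through the dilation \eqref{eq:scaling} with $\lambda=\|v\|_{L^\infty}^{-(p-1)/(2s-\gamma)}$. You instead keep $K=\|v\|_{L^\infty}$ explicit. You observe that the self-mapping and Lipschitz conditions are both homogeneous in the single quantity $K^{p-1}T^{1-\gamma/(2s)}$, so the rate falls out of solving one inequality.

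Your route is more self-contained. It never needs to check that dilation maps mild solutions to mild solutions, which rests on \eqref{eq:kernel-scaling} and the homogeneity of $|x|^{-\gamma}$ and which the paper takes from the PDE-level invariance without spelling out. The paper's route makes it structurally evident that $\tau_0$ is dimensionless and that the exponent $1/\vartheta$ is forced by scaling rather than by the particular estimates.

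Your closing remarks also address two points the paper leaves implicit. First, for rough data the solution lies only in $L^\infty((0,T)\times\R^N)$, not in $C([0,T];L^\infty)$. Second, the $L^\infty$ solution must be identified with the $L^1\cap L^\infty$ solution of Theorem~\ref{thm:local}. Both matter when Lemma~\ref{lem:exist-time} is applied in Proposition~\ref{prop:ss-floor}, where uniqueness in the $L^\infty$ class is what guarantees that the restarted solution's maximal time is exactly $T-t$.
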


\begin{proof}
\emph{Unit scale.} Working in $X_\tau=L^\infty((0,\tau)\times\R^N)$ and using
the weighted smoothing~\eqref{eq:weighted-Linfty} of
Lemma~\ref{lem:weighted-Linfty} (valid since $\gamma/(2s)<1$), the Duhamel
map $\Phi$ obeys, on the ball $\{\|u\|_{X_\tau}\le2\}$,
\begin{align*}\|\Phi(u)(t)\|_{L^\infty}
   &\le\|v\|_{L^\infty}
     +C\!\int_0^t(t-\tau')^{-\gamma/(2s)}\|u(\tau')\|_{L^\infty}^p\,d\tau'
   \\&\le\|v\|_{L^\infty}+C\,2^{p}\,t^{\,1-\gamma/(2s)} .
\end{align*}
The Lipschitz estimate is identical, via
$|a|^p-|b|^p\le p(|a|^{p-1}+|b|^{p-1})|a-b|$ and~\eqref{eq:weighted-Linfty}.
Hence there is a universal $\tau_0>0$ such that every $v$ with
$\|v\|_{L^\infty}\le1$ produces a mild solution on $[0,\tau_0]$ (choose
$\tau_0$ so small that $C2^{p}\tau_0^{1-\gamma/(2s)}\le1$ and $\Phi$ is a
contraction).

\emph{General scale.} Let $\|v\|_{L^\infty}=A>0$ and put
$\lambda:=A^{-(p-1)/(2s-\gamma)}$, so that
$\lambda^{(2s-\gamma)/(p-1)}A=1$. If $u$ is the solution with datum $v$, then
by the invariance~\eqref{eq:scaling} the rescaled function
$$\tilde u(x,t)=\lambda^{(2s-\gamma)/(p-1)}u(\lambda x,\lambda^{2s}t)$$ solves
\eqref{eq:main} with datum
$\tilde v(x)=\lambda^{(2s-\gamma)/(p-1)}v(\lambda x)$, and
$\|\tilde v\|_{L^\infty}=1$. By the unit-scale step $\tilde u$ exists on
$[0,\tau_0]$, hence $u$ exists on $[0,\lambda^{2s}\tau_0]$. Therefore
\[
   \tau^*(v)\;\ge\;\lambda^{2s}\tau_0
   =\tau_0\,A^{-2s(p-1)/(2s-\gamma)}
   =\tau_0\,A^{-1/\vartheta},
\]
which is~\eqref{eq:exist-time}.
\end{proof}

\begin{prop}[Universal self-similar lower bound]\label{prop:ss-floor}
With $\tau_0$ as in Lemma~\ref{lem:exist-time},
\begin{equation}\label{eq:ss-floor}
   \mathcal N(t)\;\ge\;\tau_0^{\,\vartheta}\,(T-t)^{-\vartheta},
   \qquad 0<t<T.
\end{equation}
No assumption on the location of the blow-up set is needed; in
particular~\eqref{eq:ss-floor} holds for origin-concentrated blow-up, for
sign-changing solutions, and on the full parameter range
$0\le\gamma<\min(2s,N)$, $p>1$.
\end{prop}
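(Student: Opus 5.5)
The bound will follow from Lemma~\ref{lem:exist-time} combined with the semigroup (restart) property of mild solutions. Fix $t\in(0,T)$ and take $v:=u(t)\in L^\infty(\R^N)$ as a new initial datum; this is legitimate because $u\in C([0,T);L^1\cap L^\infty)$ by Theorem~\ref{thm:local}. The first step is to check that the mild solution $w$ of~\eqref{eq:main} with datum $v$, which Lemma~\ref{lem:exist-time} provides on $[0,\tau^*(v))$, coincides with the time-translate $\tau\mapsto u(t+\tau)$. I would verify this from the Duhamel formula~\eqref{eq:duhamel}: splitting $\int_0^{t+\tau}$ at $t$ and using $e^{-(t+\tau-\sigma)\Lap}=e^{-\tau\Lap}e^{-(t-\sigma)\Lap}$ shows that $u(t+\cdot)$ solves the Duhamel equation with datum $u(t)$. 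Uniqueness in $L^\infty((0,\tau)\times\R^N)$, which holds by the contraction argument in the proof of Lemma~\ref{lem:exist-time}, then identifies the two solutions.

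The second step is to turn this into a statement about $T$. Since $\mathcal N(t+\tau)\to\infty$ as $t+\tau\uparrow T$, the translate cannot stay bounded in $L^\infty$ beyond time $T-t$. On the other hand, the fixed-point solution $w$ is bounded by $2\|v\|_{L^\infty}$, after rescaling, on every compact subinterval of its existence interval. Hence $\tau^*(v)\le T-t$. Lemma~\ref{lem:exist-time} then gives
\[
  \tau_0\,\mathcal N(t)^{-1/\vartheta}\;\le\;\tau^*(u(t))\;\le\;T-t,
\]
and raising both sides to the power $\vartheta$ and rearranging yields $\mathcal N(t)\ge\tau_0^{\vartheta}(T-t)^{-\vartheta}$, which is~\eqref{eq:ss-floor}. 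If $\mathcal N(t)=0$, the solution vanishes identically from time $t$ on, which contradicts blow-up, so this case does not occur.

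I expect the main obstacle to be the identification $\tau^*\le T-t$, that is, making sure the $L^\infty$-only solution of Lemma~\ref{lem:exist-time} cannot outlive the $L^1\cap L^\infty$ solution. Two things have to be checked. First, the lemma's solution is bounded in $L^\infty$ on its interval, not merely defined there; this follows because the fixed point lies in the ball $\{\|u\|_{X_\tau}\le 2\}$ after scaling. Second, uniqueness must hold in the class $L^\infty$, so that any continuation of $w$ agrees with $u(t+\cdot)$. Both facts come from the same weighted smoothing estimate~\eqref{eq:weighted-Linfty}. Because the argument uses neither a sign condition nor the location of the maximum, the bound holds in the full generality asserted in the proposition.
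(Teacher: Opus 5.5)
Your proposal is correct and follows the same route as the paper. You restart at time $t$ with datum $u(\cdot,t)$, identify the maximal $L^\infty$ existence time from $t$ with $T-t$ (using $\mathcal N\to\infty$), and apply Lemma~\ref{lem:exist-time} to get $\tau_0\,\mathcal N(t)^{-1/\vartheta}\le T-t$, which rearranges to the claim.

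Your extra checks (the Duhamel restart identity, uniqueness in $L^\infty$, the case $\mathcal N(t)=0$) make explicit what the paper leaves implicit. One small imprecision: the bound $2\|v\|_{L^\infty}$ holds only on the guaranteed interval $[0,\tau_0\|v\|_{L^\infty}^{-1/\vartheta}]$, not on every compact subinterval of $[0,\tau^*(v))$. Mere boundedness on those subintervals is all your argument actually needs.
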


\begin{proof}
Fix $t\in(0,T)$. The solution issued from the datum
$u(\cdot,t)\in L^\infty$ exists on $[t,T)$ and cannot be continued as an
$L^\infty$-valued solution past $T$, since $\mathcal N\to\infty$ there; thus
its maximal existence time, measured from initial time $t$, equals $T-t$.
Applying Lemma~\ref{lem:exist-time} with $v=u(\cdot,t)$,
\[
   T-t\;=\;\tau^*\!\bigl(u(\cdot,t)\bigr)\;\ge\;\tau_0\,\mathcal N(t)^{-1/\vartheta},
\]
which rearranges to~\eqref{eq:ss-floor}.
\end{proof}

\subsection{The Type-I upper bound is conditional}

The reverse inequality $\mathcal N(t)\le C(T-t)^{-1/(p-1)}$ --- the exclusion
of Type-II (faster) blow-up --- does \emph{not} follow from the
maximum-principle inequality~\eqref{eq:Ndini}, which is one-sided in the
wrong direction (Remark~\ref{rem:corrections}). The natural sufficient
condition is the self-similar Type-I a priori bound~\eqref{eq:typeI-ss}
introduced in Section~\ref{sec:profile}.

\begin{prop}[Conditional Type-I upper rate]\label{prop:typeI-up}
Let $1<p\le p_F$ and $\vartheta$ as in~\eqref{eq:vartheta-def}. If the
solution satisfies the self-similar Type-I bound~\eqref{eq:typeI-ss}, namely
$(T-t)^{\vartheta}\mathcal N(t)\le C_0$ on $[t_0,T)$, then
\begin{equation}\label{eq:typeI-up}
   \mathcal N(t)\;\le\;C_0\,(T-t)^{-\vartheta}
            \;\le\;C_0\,T^{\,\frac{1}{p-1}-\vartheta}\,(T-t)^{-1/(p-1)},
   \qquad t\in[t_0,T),
\end{equation}
since $\frac{1}{p-1}-\vartheta=\frac{\gamma}{2s(p-1)}\ge0$. In particular the
Type-I bound of Definition~\ref{def:typeI} holds. When $\gamma=0$ one has
$\vartheta=\frac1{p-1}$, and~\eqref{eq:typeI-ss} \emph{is} the Type-I bound.
\end{prop}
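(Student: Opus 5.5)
The first inequality in \eqref{eq:typeI-up} is the hypothesis \eqref{eq:typeI-ss} itself: dividing $(T-t)^{\vartheta}\mathcal N(t)\le C_0$ by $(T-t)^{\vartheta}>0$ gives $\mathcal N(t)\le C_0(T-t)^{-\vartheta}$ on $[t_0,T)$. For the second inequality, the plan is to compare the two powers of $T-t$ directly. We write
\[
(T-t)^{-\vartheta}=(T-t)^{\frac{1}{p-1}-\vartheta}\,(T-t)^{-\frac{1}{p-1}} .
\]
By \eqref{eq:vartheta-def},
\[
\tfrac{1}{p-1}-\vartheta=\tfrac{1}{p-1}-\tfrac{2s-\gamma}{2s(p-1)}=\tfrac{\gamma}{2s(p-1)}\ge0,
\]
since $\gamma\ge0$. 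For $t\in[t_0,T)$ we have $0<T-t\le T$. The map $r\mapsto r^{a}$ with $a\ge0$ is nondecreasing on $(0,\infty)$, and for $a=0$ it is identically $1=T^0$. Hence $(T-t)^{\frac{1}{p-1}-\vartheta}\le T^{\frac{1}{p-1}-\vartheta}$, which yields the second inequality with constant $C_0T^{\gamma/(2s(p-1))}$.

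Since $T=T_\varepsilon<\infty$ is fixed, this constant depends only on the solution. The resulting bound $\mathcal N(t)\le C(T-t)^{-1/(p-1)}$ as $t\uparrow T$ is precisely the Type-I condition of Definition~\ref{def:typeI}. The only point needing care is that the enlargement by $T^{\gamma/(2s(p-1))}$ is legitimate precisely because $T$ is finite and the exponent is nonnegative.

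Finally, for $\gamma=0$, \eqref{eq:vartheta-def} gives $\vartheta=1/(p-1)$. The exponent $\frac1{p-1}-\vartheta$ then vanishes, the factor $T^0=1$ drops out, and \eqref{eq:typeI-ss} coincides verbatim with the Type-I bound. None of these steps poses a real obstacle; the argument is purely algebraic once \eqref{eq:typeI-ss} is assumed, and the restriction $1<p\le p_F$ plays no role beyond fixing the setting.
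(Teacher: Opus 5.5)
Your proposal is correct and matches the paper's own one-line argument. The first inequality is the hypothesis~\eqref{eq:typeI-ss} rearranged, and the second follows from $(T-t)^{\frac{1}{p-1}-\vartheta}\le T^{\frac{1}{p-1}-\vartheta}$ for $0<T-t\le T$, because the exponent $\frac{\gamma}{2s(p-1)}$ is nonnegative.
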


\begin{proof}
Immediate from $(T-t)^{\vartheta}\mathcal N(t)\le C_0$ and
$(T-t)^{\,\frac1{p-1}-\vartheta}\le T^{\,\frac1{p-1}-\vartheta}$ for
$0\le T-t\le T$, the exponent being nonnegative.
\end{proof}

\noindent\emph{Status of the hypothesis~\eqref{eq:typeI-ss}.}
Validating~\eqref{eq:typeI-ss} unconditionally is the substantive point. In
the local case $s=1$ it is known for $p$ below the Sobolev exponent through
the Giga--Kohn energy method and the Liouville classification
of~\eqref{eq:profile-eq}~\cite{GigaKohn1985,GigaKohn1987}. In the nonlocal
setting $0<s<1$ a complete Liouville theorem for the rescaled stationary
problem~\eqref{eq:profile-eq} is, to our knowledge, available only partially
(cf.\ the discussion in Theorem~\ref{thm:profile}); consequently the Type-I
\emph{upper} bound is, in the present generality, conditional
on~\eqref{eq:typeI-ss}.

\subsection*{Proof of Theorem~\ref{thm:typeI}}
Part~(a) is Proposition~\ref{prop:ss-floor}; part~(b) is
Proposition~\ref{prop:typeI-low}; part~(c) is
Proposition~\ref{prop:typeI-up}. In the interior regime~\eqref{eq:H-int},
combining the lower bound~(b) with the ODE Type-I bound of
Definition~\ref{def:typeI} gives~\eqref{eq:typeI}; in the origin regime,
combining the floor~(a) with~(c) gives
$\mathcal N(t)\sim(T-t)^{-\vartheta}$. For $\gamma=0$,
Corollary~\ref{cor:gamma0} makes~(a)--(b) coincide and unconditional, and
$\vartheta=1/(p-1)$. \qed

\begin{rem}[On the direction of the comparison]\label{rem:corrections}\rm
Three points deserve emphasis.

\emph{(a) Direction.} Inequality~\eqref{eq:Ndini},
$\mathcal N'\le\delta^{-\gamma}\mathcal N^p$, controls the derivative
\emph{from above} and therefore yields the \emph{lower}
bound~\eqref{eq:typeI-low}: a solution whose growth rate is capped by
$C\mathcal N^p$ must already be large in order to blow up at $T$. An
\emph{upper} rate bound would require the opposite inequality
$\mathcal N'\gtrsim\mathcal N^p$, which the diffusion $-\Lap u\le0$ at a
maximum does not provide.

\emph{(b) The $L^1$ norm grows.} In the subcritical regime $\mathcal L(t)$ is
\emph{not} bounded as $t\uparrow T$. Integrating~\eqref{eq:main} gives
$\mathcal L'(t)=\int_{\R^N}|x|^{-\gamma}u^p\,dx\ge0$, and the self-similar
scaling~\eqref{eq:scaling} predicts $\mathcal L(t)\asymp(T-t)^{-\beta}$ with
$\beta$ as in~\eqref{eq:beta} for origin-type blow-up, so
$\mathcal L(t)\to\infty$. The lower bounds above avoid $\mathcal L$ entirely.

\emph{(c) Role of $|x|^{-\gamma}$.} The factor
$|x_t|^{-\gamma}\le\delta^{-\gamma}$ in~\eqref{eq:Ndini} is exactly what
forces the qualification $x_0\ne0$: if the maximisers approach the origin,
$|x_t|^{-\gamma}$ is unbounded and the comparison degrades to the slower
self-similar rate $\vartheta$ of Proposition~\ref{prop:ss-floor},
consistently with Remark~\ref{rem:typeI-origin}.
\end{rem}

\subsection{Relation with Theorem~\ref{thm:profile}}

\begin{rem}[Two blow-up regimes]\label{rem:typeI-origin}\rm
For $\gamma>0$ the qualification ``$x_0\ne0$'' is genuine, and
Theorem~\ref{thm:typeI} resolves the two scenarios as follows.

\begin{itemize}\setlength\itemsep{2pt}
\item \emph{Interior blow-up} (condition~\eqref{eq:H-int}).
Proposition~\ref{prop:typeI-low} gives
$\mathcal N(t)\ge c(T-t)^{-1/(p-1)}$. For $\gamma>0$ this \emph{violates}
\eqref{eq:typeI-ss}, since $(T-t)^{-1/(p-1)}\gg(T-t)^{-\vartheta}$ as
$t\uparrow T$. Hence the hypothesis~\eqref{eq:typeI-ss} of
Theorem~\ref{thm:profile} is incompatible with interior blow-up: a solution
that blows up away from the origin lies \emph{outside} the scope of the
profile theorem and obeys the ODE rate $1/(p-1)$ governed by the locally
bounded weight.
\item \emph{Origin-concentrated blow-up.} Conversely, \eqref{eq:typeI-ss}
selects precisely the slower, self-similar regime in which $|x|^{-\gamma}$ is
active at the blow-up scale; there the rescaling~\eqref{eq:ss-vars} is the
correct normalisation and Theorem~\ref{thm:profile} extracts a stationary
profile solving~\eqref{eq:profile-eq}.
\end{itemize}

\noindent The self-similar scaling~\eqref{eq:scaling} assigns to blow-up at
the origin the rate $(T-t)^{-\vartheta}$ with
$\vartheta=\frac{2s-\gamma}{2s(p-1)}<\frac1{p-1}$, while at an interior point
the weight is comparable to a constant and the rate is $1/(p-1)$. A solution
whose blow-up set reduces to $\{0\}$ therefore obeys the slower self-similar
rate; this is why the matching ODE lower bound requires blow-up away from the
origin.
\end{rem}

\begin{rem}[Two floors, and the common Liouville gate]\label{rem:two-floors}\rm
Since $\vartheta\le\frac1{p-1}$ (equality iff $\gamma=0$),
Proposition~\ref{prop:ss-floor} is in general \emph{weaker} than the interior
bound~\eqref{eq:typeI-low}: it is the \emph{universal floor}, valid
everywhere, while Proposition~\ref{prop:typeI-low} raises that floor to the
ODE rate $1/(p-1)$ precisely where the weight is harmless. The gain is
exactly the gain from $|x_t|^{-\gamma}\le\delta^{-\gamma}$ (a fixed constant,
away from the origin) over the scale-covariant weight that drives
Lemma~\ref{lem:exist-time}.

The exponent in~\eqref{eq:ss-floor} is the self-similar rate $\vartheta$, so
Proposition~\ref{prop:ss-floor} states that \emph{blow-up is never slower
than self-similar}; it is the unconditional counterpart of the self-similar
Type-I \emph{upper} bound~\eqref{eq:typeI-ss},
$\mathcal N(t)\le C(T-t)^{-\vartheta}$. Together they pin the origin rate to
$\mathcal N(t)\asymp(T-t)^{-\vartheta}$, which is precisely the normalisation
under which~\eqref{eq:ss-vars} is bounded and Theorem~\ref{thm:profile}
extracts a stationary profile. Thus the lower bound is free, and the only
conditional input remaining --- in \emph{both} the origin regime here and in
Theorem~\ref{thm:profile} --- is a fractional Liouville classification of
nonnegative bounded solutions of~\eqref{eq:profile-eq} (equivalently, the
exclusion of Type-II blow-up). Such a result would simultaneously
\textup{(i)} upgrade the upper bound in Proposition~\ref{prop:typeI-up} ---
hence Theorem~\ref{thm:typeI} --- to an unconditional statement, and
\textup{(ii)} supply the uniqueness of the limit profile left conditional in
Theorem~\ref{thm:profile}. For $\gamma=0$ one has $\vartheta=1/(p-1)$,
Propositions~\ref{prop:typeI-low} and~\ref{prop:ss-floor} coincide, and the
floor is the genuine Type-I rate of Definition~\ref{def:typeI},
unconditionally.
\end{rem}

\begin{rem}\rm
The sign restriction $u_0\ge0$ is convenient but not essential: it suffices
that the mild solution become nonnegative for $t>0$, with positivity then
preserved. The universal lower bound of Proposition~\ref{prop:ss-floor} needs
no sign condition at all.
\end{rem}

\section{Asymptotic blow-up profile}\label{sec:profile}

The scaling invariance~\eqref{eq:scaling} singles out a self-similar rate at
the blow-up time. Under $u_\lambda(x,t)=\lambda^{(2s-\gamma)/(p-1)}
u(\lambda x,\lambda^{2s}t)$, the choice $\lambda=(T_\varepsilon-t)^{-1/(2s)}$
gives the rate $(T_\varepsilon-t)^{-\vartheta}$ with
\begin{equation}\label{eq:ss-exp}
   \vartheta:=\frac{2s-\gamma}{2s(p-1)} .
\end{equation}
For $\gamma>0$ this is strictly slower than the ODE-type rate $1/(p-1)$ of
Definition~\ref{def:typeI}; the two coincide only when $\gamma=0$. We
therefore work with the \emph{self-similar Type-I} condition
\begin{equation}\label{eq:typeI-ss}
   (T_\varepsilon-t)^{\vartheta}\,
   \|u(\cdot,t)\|_{L^\infty(\R^N)}\;\le\;C,
   \qquad t\in[0,T_\varepsilon),
\end{equation}
which is compatible with the rescaling below and reduces to
Definition~\ref{def:typeI} when $\gamma=0$.

In the self-similar variables
\begin{equation}\label{eq:ss-vars}
  y=\frac{x}{(T_\varepsilon-t)^{1/(2s)}},\qquad
  \tau=-\log(T_\varepsilon-t),\qquad
  u(x,t)=(T_\varepsilon-t)^{-\vartheta}\,v(y,\tau),
\end{equation}
the chain rule gives
$$\partial_t u=(T_\varepsilon-t)^{-\vartheta-1}\bigl[v_\tau
   +\tfrac{1}{2s}\,y\cdot\nabla_y v+\vartheta\,v\bigr],$$
while $\Lap_x u=(T_\varepsilon-t)^{-\vartheta-1}\Lap_y v$ and
$|x|^{-\gamma}|u|^p=(T_\varepsilon-t)^{-\vartheta-1}|y|^{-\gamma}|v|^p$
(using $\vartheta(p-1)=(2s-\gamma)/(2s)$). Substituting
in~\eqref{eq:main} and dividing by $(T_\varepsilon-t)^{-\vartheta-1}$,
\begin{equation}\label{eq:ss-eq}
  v_\tau\;=\;-\Lap v
   \;-\;\frac{1}{2s}\,y\cdot\nabla v
   \;+\;|y|^{-\gamma}|v|^{p-1}v
   \;-\;\vartheta\,v .
\end{equation}
The drift term $-\tfrac{1}{2s}y\cdot\nabla v$ is the Ornstein--Uhlenbeck
term of the Giga--Kohn rescaling~\cite{GigaKohn1985,GigaKohn1987}; the
associated stationary problem is
\begin{equation}\label{eq:profile-eq}
  \Lap V\;+\;\frac{1}{2s}\,y\cdot\nabla V\;+\;\vartheta\,V
  \;=\;|y|^{-\gamma}\,V^p,\qquad y\in\R^N .
\end{equation}
Under~\eqref{eq:typeI-ss}, $\|v(\cdot,\tau)\|_{L^\infty(\R^N)}$ is bounded
uniformly in $\tau$.

\begin{theo}[Asymptotic profile, conditional]\label{thm:profile}
Let $1<p<p_F$ and $u_0\ge0$ with $\int_{\R^N}u_0>0$. Suppose the mild
solution $u$ blows up at $T_\varepsilon<\infty$ and
satisfies~\eqref{eq:typeI-ss}. Then there is a sequence $\tau_n\to\infty$ with
\(
  v(\cdot,\tau_n)\to V
\)
in $L^\infty_{\mathrm{loc}}(\R^N)$, where $V\ge0$ is a bounded
distributional solution of~\eqref{eq:profile-eq}. If, in addition, a
Liouville-type classification of nonnegative bounded solutions
of~\eqref{eq:profile-eq} holds, then $V$ is independent of the subsequence.
\end{theo}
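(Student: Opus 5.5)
The plan is a compactness argument in the self-similar variables \eqref{eq:ss-vars}, based on the uniform bound supplied by \eqref{eq:typeI-ss}.

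\emph{Step 1: uniform bounds.} Under \eqref{eq:typeI-ss} the rescaled function satisfies $0\le v(y,\tau)\le C$ for all $\tau\ge-\log T_\varepsilon$. Nonnegativity comes from $u_0\ge0$ and the positivity of the semigroup (Proposition~\ref{prop:contraction}).

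\emph{Step 2: equicontinuity.} I will get local equicontinuity in $(y,\tau)$ on $[\tau_n-1,\tau_n+1]$ from the parabolic structure rather than from \eqref{eq:ss-eq} directly. For each $\tau_n$, consider the time-translated and rescaled solution
\[
w_n(z,\sigma)=\lambda_n^{\vartheta}\,u\bigl(\lambda_n^{1/(2s)}z,\;t_n+\lambda_n\sigma\bigr),
\qquad \lambda_n=T_\varepsilon-t_n=e^{-\tau_n}.
\]
By \eqref{eq:scaling}, $w_n$ solves \eqref{eq:main} again. On $\sigma\in[-1/2,1/2]$ it is bounded by $C\,2^{\vartheta}$, hence uniformly bounded.

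Write the Duhamel formula for $w_n$ from the starting time $\sigma=-1/2$. Lemma~\ref{lem:weighted-Linfty} then gives uniform $L^\infty$ control of the source contribution. The H\"older regularity of $e^{-t\Lap}$ applied to bounded data, combined with the integrable weighted smoothing kernel (exponent $\gamma/(2s)<1$), gives a uniform H\"older modulus in $z$ on compacts. A corresponding modulus in $\sigma$ follows from the same Duhamel representation, via the semigroup estimate $\|(e^{-h\Lap}-I)e^{-t\Lap}f\|_\infty\lesssim (h/t)^{\theta}\|f\|_\infty$.

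Since $v(y,\tau)$ is obtained from $w_n$ by the smooth, locally bi-Lipschitz change of variables $\sigma\mapsto\tau$, $z\mapsto y$, these bounds transfer to $v$. Arzel\`a--Ascoli and a diagonal extraction then give $v(\cdot,\tau_n+\cdot)\to W$ locally uniformly on $\R^N\times[-1,1]$, and in particular $v(\cdot,\tau_n)\to V:=W(\cdot,0)$ in $L^\infty_{\mathrm{loc}}$.

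\emph{Step 3: the limit is stationary.} This is the main obstacle. Compactness alone only yields a bounded eternal solution $W$ of \eqref{eq:ss-eq}, not a stationary one. The Giga--Kohn Lyapunov functional is not available here with a clean weight, because of the nonlocal operator.

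\emph{First attempt.} Use monotonicity. Choose $\tau_n$ along which $\sup_y v$ or a weighted integral $\int v\,\rho$ is almost monotone, and show $\partial_\tau W=0$ in distributions by passing to the limit in
\[
\int_{\tau_n-1}^{\tau_n+1}\!\!\int \partial_\tau v\,\varphi .
\]
A concrete substitute for the energy is the backward-kernel functional $g_b$ of Lemma~\ref{lem:ub-ode} with $b=T_\varepsilon$. In self-similar variables it becomes $\int v(y,\tau)\mathscr K_s(y)\,dy$ up to the factor $e^{\tau(\vartheta-N/(2s))}$. It is monotone and bounded, which forces the time derivative of a weighted average to vanish along a sequence. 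I would try to upgrade this to $W_\tau\equiv0$ by testing against the whole family $\mathscr K_s(\cdot-y_0)$.

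\emph{Fallback.} If this upgrade fails, I would weaken the conclusion to: $V$ is a bounded solution of \eqref{eq:profile-eq} in the $\omega$-limit sense, obtained by averaging $W$ in $\tau$. For a weak-$*$ limit $V=\lim\frac1{2}\int_{-1}^{1}W$, the linear terms pass to the limit; the nonlinearity requires local uniform convergence, which Step 2 supplies.

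\emph{Step 4: distributional equation and uniqueness.} Once stationarity holds, I pass to the limit in the weak form of \eqref{eq:ss-eq}. The test functions are in $C_c^\infty$; $\Lap\varphi$ decays like $|y|^{-N-2s}$, which is integrable against bounded $v$. The drift term is handled by integrating by parts, which gives $-\tfrac{1}{2s}\int v\,\mathrm{div}(y\varphi)$. The weighted nonlinearity converges by local uniform convergence together with $|y|^{-\gamma}\in L^1_{\mathrm{loc}}$. This shows that $V\ge0$ is a bounded distributional solution of \eqref{eq:profile-eq}.

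Finally, under the assumed Liouville classification, the set of admissible limits is a single point. Since every subsequence has a further subsequence converging to it, $V$ is independent of the subsequence.
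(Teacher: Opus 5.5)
Your Steps~1, 2 and~4 follow the paper's sketch: uniform bound from \eqref{eq:typeI-ss}, local H\"older compactness, Arzel\`a--Ascoli, and passage to the limit in the weak form. Your rescaled Duhamel argument is a reasonable concrete substitute for the regularity citations. The genuine gap is Step~3, which you leave open. What your argument actually gives is $v(\cdot,\tau_n+\cdot)\to W$ locally uniformly, with $W\ge0$ a bounded eternal solution of \eqref{eq:ss-eq}. Nothing forces $\partial_\tau W\equiv0$, so $V=W(\cdot,0)$ need not solve \eqref{eq:profile-eq}, and neither of your devices closes this.

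\emph{First attempt.} With $b=T_\varepsilon$ one finds $g_{T_\varepsilon}(t)=e^{\vartheta\tau}m(\tau)$, where $m(\tau):=\int v(y,\tau)\mathscr K_s(y)\,dy$. The powers of $T_\varepsilon-t$ coming from $G_s$ and from $dx$ cancel, so there is no factor $e^{-N\tau/(2s)}$. Thus $g_{T_\varepsilon}$ is monotone but grows like $e^{\vartheta\tau}$, while $m$ is bounded but satisfies $m'=-\vartheta m+\int|y|^{-\gamma}v^p\mathscr K_s\,dy$, which has no sign. No single quantity is both monotone and bounded. Boundedness of $m$ does give $m'(\tau_n)\to0$ along some sequence, but that is one scalar constraint, $\int\partial_\tau W(\cdot,0)\,\mathscr K_s=0$. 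The translates $\mathscr K_s(\cdot-y_0)$ are not invariant densities for a drift centred at $0$. Testing against them produces the extra signless term $\frac1{2s}\int v\,y_0\cdot\nabla\mathscr K_s(\cdot-y_0)$, and the good sequences would depend on $y_0$ anyway.

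\emph{Fallback.} Averaging \eqref{eq:ss-eq} over a window $[-L,L]$ leaves the boundary term $(W(\cdot,L)-W(\cdot,-L))/(2L)$, which does not vanish for your $L=1$. More seriously, it replaces $W^p$ by its time average. By Jensen this average dominates the $p$-th power of the average of $W$, with equality only if $W(y,\cdot)$ is constant on the window, i.e.\ only if Step~3 already holds. Local uniform convergence does not remove this defect. Even as $L\to\infty$, the limit $\bar V$ only satisfies $\Lap\bar V+\frac1{2s}y\cdot\nabla\bar V+\vartheta\bar V\ge|y|^{-\gamma}\bar V^p$, which is not the stated theorem. Your uniqueness step inherits the gap, since it needs \emph{every} $\omega$-limit point to be stationary, not only the one along your chosen $\tau_n$.

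The paper closes Step~3 in one sentence. It asserts that \eqref{eq:ss-eq} carries a Giga--Kohn energy, monotone and bounded below along the rescaled flow, so that the $\omega$-limit set consists of equilibria. Such a Lyapunov functional is exactly the ingredient your proposal lacks, and your doubt that it is available is well founded. For $s=1$, the Giga--Kohn identity writes the rescaled equation as a gradient flow in $L^2(e^{-|y|^2/4}dy)$, which requires the linear part to be symmetric in that space. Here, write symmetry in $L^2(\mu\,dy)$ as $\mathcal A^{*}(\mu g)=\mu\,\mathcal A g$, with $\mathcal A=-\Lap-\frac1{2s}y\cdot\nabla$. Evaluating at a point $y\notin\operatorname{supp}g$ gives $\int g(z)\,(\mu(z)-\mu(y))\,|y-z|^{-N-2s}\,dz=0$. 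This forces $\mu$ to be constant, and the drift is not symmetric in unweighted $L^2$. So $\mathcal A$ is symmetric in no weighted $L^2$ space, and the standard energy does not transfer. The paper's sketch does not construct a replacement. To prove the statement as posed, you would need a genuine Lyapunov functional for \eqref{eq:ss-eq}, or some other mechanism that places an equilibrium in the $\omega$-limit set. Without it, the honest conclusion of your argument is subsequential convergence to a bounded eternal solution of \eqref{eq:ss-eq}, not to a solution of \eqref{eq:profile-eq}.
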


\begin{proof}[Sketch]
By~\eqref{eq:typeI-ss}, $\|v(\cdot,\tau)\|_{L^\infty}\le C$ for $\tau\ge0$,
so the right-hand side of~\eqref{eq:ss-eq} is bounded in
$L^\infty_{\mathrm{loc}}$ (the weight $|y|^{-\gamma}$ is locally integrable
since $\gamma<N$). Fractional parabolic regularity with bounded right-hand
side~\cite{ChenKim2002,CaffarelliVasseur2010} gives uniform local H\"older
continuity of $v$ in $(y,\tau)$, so by Arzel\`a--Ascoli the time-translates
$\{v(\cdot,\cdot+\tau)\}_{\tau\ge1}$ are precompact in
$C_{\mathrm{loc}}(\R^N\times\R)$; along a subsequence $\tau_n\to\infty$ they
converge to a bounded entire solution $v_\infty\ge0$ of~\eqref{eq:ss-eq}. The
Ornstein--Uhlenbeck drift in~\eqref{eq:ss-eq} carries the Giga--Kohn energy
structure~\cite{GigaKohn1985,GigaKohn1987}, monotone and bounded below along
the rescaled flow; consequently the $\omega$-limit is $\tau$-independent, so
$v_\infty=V$ solves the stationary problem~\eqref{eq:profile-eq} and
$v(\cdot,\tau_n)\to V$ in $L^\infty_{\mathrm{loc}}(\R^N)$. Uniqueness of the
limit is a Liouville question for~\eqref{eq:profile-eq}; a complete
classification in the present generality is, to our knowledge, open (partial
results for $\gamma=0$ are in~\cite{Fino,IkedaSobajima2019}). We therefore
leave the statement conditional: subsequential convergence to a bounded
stationary profile is unconditional, while uniqueness relies on such a
classification.
\end{proof}

\begin{rem}\rm
For $s=1$, $\gamma=0$, \eqref{eq:profile-eq} is the Giga--Kohn equation,
whose only nonnegative bounded subcritical solution is the constant
$\kappa=(p-1)^{-1/(p-1)}$~\cite{GigaKohn1985,GigaKohn1987}. Even the
fractional case $\gamma=0$, $s\in(0,1)$ is only partially understood, and
$\gamma>0$ adds the difficulty that $|y|^{-\gamma}$ breaks translation
invariance of the limit problem.
\end{rem}
\section{Concluding remarks}\label{sec:concluding}

\begin{itemize}\setlength\itemsep{3pt}
\item Theorem~\ref{thm:main} gives sharp two-sided lifespan estimates for
the fractional Hardy--H\'enon equation~\eqref{eq:main} in the subcritical and
critical regimes (under $\gamma<2s(p-1)$), and global existence in the
supercritical regime. The sharp exponent~\eqref{eq:beta} interpolates between
the classical Lee--Ni formula~\cite{LeeNi1992} ($s=1$, $\gamma=0$) and the
higher-order results of~\cite{TobakhanovTorebek}, and incorporates the weight
in the clean additive form
$\beta=\tfrac{1}{p-1}-\tfrac{N}{2s}-\tfrac{\gamma}{2s(p-1)}$.

\item The weight $|x|^{-\gamma}$ enters the analysis only through the
interpolation Lemma~\ref{lem:hardy} (lower bound) and the constant $C_*$
in~\eqref{eq:ub-Cstar} (upper bound); this is the mechanism producing the
additive correction $-\gamma/(2s(p-1))$, as opposed to the multiplicative
guess $2s\rightsquigarrow2s-\gamma$.

\item The backward fractional heat kernel provides a single, uniformly valid
test function for the upper bound in both the small-data
(Section~\ref{sec:upper}) and large-data (Section~\ref{sec:largedata})
regimes; the only difference is the datum bound, $g_b(0)\gtrsim\varepsilon
b^{-N/(2s)}$ versus $g_b(0)\ge\varepsilon c_0$.

\item For the blow-up rate (Section~\ref{sec:typeI}) the \emph{lower} bounds
are unconditional: every blowing-up nonnegative solution obeys the universal
self-similar floor $\mathcal N(t)\gtrsim(T_\varepsilon-t)^{-\vartheta}$
(Proposition~\ref{prop:ss-floor}), raised to the ODE rate
$\mathcal N(t)\gtrsim(T_\varepsilon-t)^{-1/(p-1)}$ for interior blow-up
(Proposition~\ref{prop:typeI-low}, via a nonlocal maximum principle). The
matching Type-I \emph{upper} bound, and the uniqueness of the self-similar
profile (Theorem~\ref{thm:profile}), are conditional on one and the same
input: a fractional Liouville classification for~\eqref{eq:profile-eq}, which
remains partially open.

\item Several natural extensions are of independent interest:
\begin{itemize}\setlength\itemsep{1pt}
\item the H\'enon case $\gamma<0$ (weight $|x|^{|\gamma|}$, weakening the
nonlinearity near the origin), with a different critical exponent;
\item the half-space $\R^N_+$ with Dirichlet or Neumann conditions;
\item general weights $V(x)\asymp|x|^{-\gamma}$ near the origin and bounded
at infinity;
\item the time-fractional analogue
$\partial_t^\alpha u+(-\Delta)^s u=|x|^{-\gamma}|u|^p$, in the spirit
of~\cite{SunShi2012};
\item a fractional Liouville theorem for~\eqref{eq:profile-eq}, which would
render the Type-I upper bound and the profile uniqueness unconditional;
\item the residual range $1<p\le1+\gamma/(2s)$, where the sharp $L^\infty$
two-sided lifespan law is open (Remark~\ref{rem:ub-range});
\item the exterior problem in the recurrent regime $N=2s$ (e.g. $s=1/2$,
$N=1$), where a different lifespan law is expected.
\end{itemize}
\end{itemize}

\section*{Acknowledgments}
Berikbol T. Torebek is supported by the Science Committee of the Ministry of Education and Science of the Republic of Kazakhstan (Grant No. AP26195417)

\section*{Declaration of competing interest} The authors declare that there is no conflict of interest.

\section*{Data Availability Statements} The manuscript has no associated data.



\begin{thebibliography}{99}

\bibitem{AbdellaouiPeralPrimo2020}
B.~Abdellaoui, I.~Peral, and A.~Primo,
{\em A note on the Fujita exponent in fractional heat equation involving the
Hardy potential,}
Math.\ Eng.\ \textbf{2} (2020), no.~4, 639--656.

\bibitem{AlsaediEtAl2020}
A.~Alsaedi, B.~Ahmad, M.~Kirane, and A.~Nabti,
{\em Lifespan of solutions of a fractional evolution equation with higher
order diffusion on the Heisenberg group,}
Electron.\ J.\ Differential Equations \textbf{2020}, Paper No.~2, 10~pp.

\bibitem{biagi2024}
S.~Biagi, F.~Punzo, and E.~Vecchi,
{\em Global solutions to semilinear parabolic equations driven by mixed
local--nonlocal operators,}
Bull.\ Lond.\ Math.\ Soc.\ \textbf{57} (2025), no.~1, 265--284.

\bibitem{BG1960}
R.~M.~Blumenthal and R.~K.~Getoor,
{\em Some theorems on stable processes,}
Trans.\ Amer.\ Math.\ Soc.\ \textbf{95} (1960), 263--273.

\bibitem{BogdanByczkowski2002}
K.~Bogdan and T.~Byczkowski,
{\em Potential theory for the $\alpha$-stable Schr\"odinger operator on
bounded Lipschitz domains,}
Studia Math.\ \textbf{133} (1999), no.~1, 53--92.

\bibitem{Bonforte2018}
M.~Bonforte, A.~Figalli, and J.~L.~V\'azquez,
{\em Sharp boundary behaviour of solutions to semilinear nonlocal elliptic
equations,}
Calc.\ Var.\ Partial Differential Equations \textbf{57} (2018), no.~2, Paper No.~57.

\bibitem{BrezisCazenave1996}
H.~Br\'ezis and T.~Cazenave,
{\em A nonlinear heat equation with singular initial data,}
J.\ Anal.\ Math.\ \textbf{68} (1996), 277--304.

\bibitem{CaffarelliVasseur2010}
L.~Caffarelli and L.~Silvestre,
{\em Regularity theory for fully nonlinear integro-differential equations,}
Comm.\ Pure Appl.\ Math.\ \textbf{62} (2009), no.~5, 597--638.

\bibitem{ChenKim2002}
Z.-Q.~Chen and P.~Kim,
{\em Two-sided estimates on the density of the Feynman--Kac semigroups of
stable-like processes,}
Electron.\ J.\ Probab.\ \textbf{7} (2002), Paper No.~3, 26~pp.

\bibitem{ChikamiIkedaTaniguchi2022}
N.~Chikami, M.~Ikeda, and K.~Taniguchi,
{\em Optimal well-posedness and forward self-similar solution for the
Hardy--H\'enon parabolic equation in critical weighted Lebesgue spaces,}
Nonlinear Anal.\ \textbf{222} (2022), Paper No.~112931, 28~pp.

\bibitem{DaiQin2023}
W.~Dai and G.~Qin,
{\em Liouville-type theorems for fractional and higher-order H\'enon--Hardy
type equations via the method of scaling spheres,}
Int.\ Math.\ Res.\ Not.\ IMRN \textbf{2023}, no.~11, 9001--9070.

\bibitem{DaoFractionalHH}
G.~ Diebou Yomgne,
{\em On the generalized parabolic {H}ardy-{H}\'enon equation: existence, blow-up, self-similarity and large-time asymptotic
              behavior}, {Differential Integral Equations}, {\bf 35} ({2022}), {57--88}.

\bibitem{Fuj2025}
L.~M.~Del Pezzo and R.~Ferreira,
{\em Fujita exponent and blow-up rate for a mixed local and nonlocal heat
equation,}
Nonlinear Anal.\ \textbf{255} (2025), Paper No.~113761.

\bibitem{DPV2012Hitchhiker}
E.~Di~Nezza, G.~Palatucci, and E.~Valdinoci,
{\em Hitchhiker's guide to the fractional Sobolev spaces,}
Bull.\ Sci.\ Math.\ \textbf{136} (2012), no.~5, 521--573.

\bibitem{Fino}
A.~Fino and G.~Karch,
{\em Decay of mass for nonlinear equation with fractional Laplacian,}
Monatsh.\ Math.\ \textbf{160} (2010), 375--384.

\bibitem{Fujita1966}
H.~Fujita,
{\em On the blowing up of solutions of the Cauchy problem for
$u_t=\Delta u + u^{1+\alpha}$,}
J.\ Fac.\ Sci.\ Univ.\ Tokyo Sect.\ I \textbf{13} (1966), 109--124.

\bibitem{GeorgievPalmieri}
V.~Georgiev and A.~Palmieri,
{\em Lifespan estimates for local in time solutions to the semilinear heat
equation on the Heisenberg group,}
Ann.\ Mat.\ Pura Appl.\ (4) \textbf{200} (2021), no.~3, 999--1032.

\bibitem{GigaKohn1985}
Y.~Giga and R.~V.~Kohn,
{\em Asymptotically self-similar blow-up of semilinear heat equations,}
Comm.\ Pure Appl.\ Math.\ \textbf{38} (1985), no.~3, 297--319.

\bibitem{GigaKohn1987}
Y.~Giga and R.~V.~Kohn,
{\em Characterizing blowup using similarity variables,}
Indiana Univ.\ Math.\ J.\ \textbf{36} (1987), no.~1, 1--40.

\bibitem{Grafakos2014}
L.~Grafakos,
{\em Classical Fourier Analysis,}
3rd ed., Grad.\ Texts in Math., Vol.~249, Springer, New York, 2014.

\bibitem{Kirane1}
M.~Guedda and M.~Kirane,
{\em Criticality for some evolution equations,}
Differ.\ Uravn.\ \textbf{37} (2001), 511--520.

\bibitem{Kirane2}
M.~Guedda and M.~Kirane,
{\em A note on nonexistence of global solutions to a nonlinear integral
equation,}
Bull.\ Belg.\ Math.\ Soc.\ Simon Stevin \textbf{6} (1999), 491--497.

\bibitem{Hayakawa1973}
K.~Hayakawa,
{\em On nonexistence of global solutions of some semilinear parabolic
differential equations,}
Proc.\ Japan Acad.\ \textbf{49} (1973), 503--505.

\bibitem{IkedaSobajima2019}
M.~Ikeda and M.~Sobajima,
{\em Sharp upper bound for lifespan of solutions to some critical semilinear
parabolic, dispersive and hyperbolic equations via a test function method,}
Nonlinear Anal.\ \textbf{182} (2019), 57--74.

\bibitem{Johnson}
W.~P.~Johnson,
{\em The curious history of Fa\`a di Bruno's formula,}
Amer.\ Math.\ Monthly \textbf{109} (2002), no.~3, 217--234.

\bibitem{Kaplan}
S.~Kaplan,
{\em On the growth of solutions of quasi-linear parabolic equations,}
Comm.\ Pure Appl.\ Math.\ \textbf{16} (1963), 305--330.

\bibitem{KobayashiSiraoTanaka1977}
K.~Kobayashi, T.~Sirao, and H.~Tanaka,
{\em On the growing up problem for semilinear heat equations,}
J.\ Math.\ Soc.\ Japan \textbf{29} (1977), no.~3, 407--424.

\bibitem{KumarTorebek2026} V.~Kumar, and B.~T.~Torebek,
{\em  Fujita-type results for the semilinear heat equations driven by mixed
local-nonlocal operators,}
J. \ Differential \ Equations {\bf 465} (2026), Paper No.114241, 25 pp.

\bibitem{LeeNi1992}
T.-Y.~Lee and W.-M.~Ni,
{\em Global existence, large time behavior and life span of solutions of a
semilinear parabolic Cauchy problem,}
Trans.\ Amer.\ Math.\ Soc.\ \textbf{333} (1992), 365--378.

\bibitem{MajdoubLaMatematica2023}
M.~Majdoub,
{\em On the Fujita exponent for a Hardy--H\'enon equation with a
spatial-temporal forcing term,}
La Matematica \textbf{2} (2023), no.~2, 340--361.

\bibitem{MajdoubOtsmaneTayachi2018}
M.~Majdoub, S.~Otsmane, and S.~Tayachi,
{\em Local well-posedness and global existence for the biharmonic heat
equation with exponential nonlinearity,}
Adv.\ Differential Equations \textbf{23} (2018), no.~7--8, 489--522.

\bibitem{MYZ2008}
C.~Miao, B.~Yuan, and B.~Zhang,
{\em Well-posedness of the Cauchy problem for the fractional power
dissipative equations,}
Nonlinear Anal.\ \textbf{68} (2008), 461--484.

\bibitem{PeletierTroy2012}
L.~A.~Peletier and W.~C.~Troy,
{\em Spatial patterns: higher order models in physics and mechanics,}
Progr.\ Nonlinear Differential Equations Appl., Vol.~45,
Birkh\"auser, Boston, 2001.

\bibitem{Alonso2021}
I.~Peral Alonso and F.~Soria de Diego,
{\em Elliptic and parabolic equations involving the Hardy--Leray potential,}
De Gruyter Ser.\ Nonlinear Anal.\ Appl., Vol.~38, De Gruyter, Berlin, 2021.

\bibitem{ServadeiValdinoci2013}
R.~Servadei and E.~Valdinoci,
{\em Variational methods for non-local operators of elliptic type,}
Discrete Contin.\ Dyn.\ Syst.\ \textbf{33} (2013), 2105--2137.

\bibitem{Snoussi1999}
S.~Snoussi, S.~Tayachi, and F.~B.~Weissler,
{\em Asymptotically self-similar global solutions of a general semilinear
heat equation,}
Math.\ Ann.\ \textbf{321} (2001), no.~1, 131--155.

\bibitem{Sugitani1975}
S.~Sugitani,
{\em On nonexistence of global solutions for some nonlinear integral
equations,}
Osaka J.\ Math.\ \textbf{12} (1975), 45--51.

\bibitem{Sun2010}
F.~Sun,
{\em Life span of blow-up solutions for higher-order semilinear parabolic
equations,}
Electron.\ J.\ Differential Equations \textbf{2010}, No.~17, 9~pp.

\bibitem{SunShi2012}
F.~Sun and P.~Shi,
{\em Global existence and non-existence for a higher-order parabolic equation
with time-fractional term,}
Nonlinear Anal.\ \textbf{75} (2012), no.~10, 4145--4155.

\bibitem{TayachiWeissler2023}
S.~Tayachi and F.~B.~Weissler,
{\em New life-span results for the nonlinear heat equation,}
J.\ Differential Equations \textbf{373} (2023), 564--625.

\bibitem{TobakhanovTorebek}
N.~N.~Tobakhanov and B.~T.~Torebek,
{\em A note on lifespan estimates for higher-order parabolic equations,}
Bull.\ Lond.\ Math.\ Soc. {\bf 58}, \ (2026), Paper No.e70408.

\bibitem{TobakhanovTorebek2026}
N.~N.~Tobakhanov and B.~T.~Torebek,
{\em On the critical behavior for the semilinear biharmonic heat equation
with forcing term in exterior domain,}
J.\ Differential Equations \textbf{451} (2026), Paper No.~113758, 53~pp.

\bibitem{YangZou2020}
H.~Yang and W.~Zou,
{\em Sharp blow up estimates and precise asymptotic behavior of singular
positive solutions to fractional Hardy--H\'enon equations,}
J.\ Differential Equations \textbf{269} (2020), no.~9, 7426--7480.

\end{thebibliography}
\end{document}